\def\Tor{\operatorname{Tor}}
\DeclareMathOperator{\reg}{reg}
\theoremstyle{plain}
\newtheorem{theorem}{Theorem}[section]
\newtheorem{corollary}[theorem]{Corollary}
\newtheorem{proposition}[theorem]{Proposition}
\newtheorem{definition}[theorem]{Definition}
\newtheorem{lemma}[theorem]{Lemma}
\newtheorem{question}[theorem]{Question}
\newtheorem{observation}[theorem]{Observation}
\numberwithin{equation}{section}
\begin{document} 

\title{Powers of Edge Ideals of regularity three bipartite graphs}
\date{\today}

\author[Banerjee]{Ali Alilooee, Arindam Banerjee}
\address{Department of Mathematics, University of Virginia,
Charlottesville, VA, USA} \email{ab4cb@virginia.edu}
\address{Department of Mathematics, Dalhousie University,
Halifax, NS, Canada} \email{alilooee@mathstat.dal.ca}

\subjclass[2000]{Primary 13-02, 13F55, 05C10}

\baselineskip 16pt \footskip = 32pt

\begin{abstract}
In this paper we prove that if $I(G)$ is a bipartite edge ideal with regularity three then for all $s\geq 2$ the regularity of $I(G)^s$ is exactly $2s+1$.
\end{abstract}

\maketitle \markboth{A. Alilooee, A.Banerjee}
{Regularity three bipartite graphs}

\section{Introduction}
\bigskip
  In this article we study the higher powers of edge ideals of regularity three bipartite graphs. 
  Previous studies have found classes of graphs, powers of whose edge ideals have linear minimal free resolution. 
  Herzog, Hibi and Zheng showed in~\cite{MR2091479} that powers of edge ideals with linear resolution have linear resolution themselves.  
  A classic result by Fr\"oberg~\cite{MR1171260}  says that an edge ideal has linear resolution if and only if there is no induced cycle of length greater than 
  three in its complement. 
  Francisco, Ha and Van Tuyl~\cite{MR2301246} showed that if any power of an edge ideal has linear minimal free resolution, then the 
  complement of the corresponding 
  graph has no induced four cycles, 
  which is equivalent to having a linear presentation due to~\cite{MR3011341}. 
  In light of these, and based on the C. Francisco Mcaulay 2 calculations, E. Nevo and I. Peeva  asked the following question, which is the base case of the Open Problem $1.11(2)$~\cite{MR3011341}.\\

\begin{question}\label{ques1}
Let $I(G)$ be the edge ideal of a graph $G$ which does not have any induced four cycle in its complement. If $\reg(I(G)) \leq 3$, then is it true that for all $s\geq 2$, $I(G) ^ s$ has linear minimal free resolution?
\end{question}
 One important fact about bipartite graphs is that the complement of a bipartite graph cannot have any induced cycle of length greater than four. 
  In light of Fr\"oberg's theorem and this fact, one can say that for bipartite graphs linear presentation implies linear resolution. 
   Due to these, we ask a question similar to Question 1.1 for bipartite graphs with a weaker hypothesis and answer it in the affirmative:
   
\begin{theorem}
Let $G$ be a bipartite connected graph with edge ideal $I(G)$. If $\text{reg}(I(G))=3$ then for all $s \geq 1$, $\text{reg}((I(G)^s)=2s+1$.
\end{theorem}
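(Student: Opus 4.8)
The plan is to prove the theorem by establishing two matching bounds. The lower bound is the easy direction: since $\reg(I(G)) = 3$ means $I(G)$ does not have a linear resolution, and it is a standard fact that for any graph with at least one edge $\reg(I(G)^s) \geq 2s + 1$ (the $2s$ coming from the degree of the generators and the $+1$ reflecting the non-linearity), the real content lies in the upper bound $\reg(I(G)^s) \leq 2s+1$ for all $s \geq 1$. The $s=1$ case is simply the hypothesis $\reg(I(G)) = 3$, so I would focus on $s \geq 2$, which is where the interplay with bipartiteness must be exploited.

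My primary tool would be an inductive approach via the short exact sequences relating $I(G)^s$ to $I(G)^{s-1}$, together with the theory of \emph{even-connection} and colon ideals developed for polarizing and analyzing powers of edge ideals. The key structural input is that for a monomial $m$ of degree $2(s-1)$ that is a minimal generator of $I(G)^{s-1}$, the colon ideal $(I(G)^s : m)$ is again generated by quadrics corresponding to an associated graph (built by adding edges between vertices that are ``even-connected'' with respect to $m$), and one can show this associated graph inherits a controlled combinatorial structure. I would first reduce, via the short exact sequence
\begin{equation}
0 \to \frac{R}{(I(G)^{s} : m)}(-2(s-1)) \to \frac{R}{I(G)^{s}} \to \frac{R}{(I(G)^{s}, m)} \to 0,
\end{equation}
to controlling the regularity of each colon quotient $R/(I(G)^s : m)$, aiming to show $\reg(R/(I(G)^s : m)) \leq 2$, which forces the desired linearity.

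The crux is therefore to prove that each such colon ideal $(I(G)^s : m)$ has regularity at most $2$, i.e.\ has a linear resolution, using Fr\"oberg's criterion: one must verify that the graph associated to the colon ideal has a complement with no induced cycle of length $\geq 4$. Here bipartiteness is essential, because the complement of a bipartite graph can have induced four-cycles but no longer ones, so the entire obstruction to linearity collapses to controlling induced four-cycles in the complement of the even-connection graph. I would argue that if the even-connection graph associated to $m$ had an induced four-cycle in its complement, one could trace the even-connection paths back through $G$ to produce a forbidden configuration contradicting either bipartiteness or the hypothesis $\reg(I(G)) = 3$ (which, via the characterization of regularity-three bipartite graphs, restricts the induced subgraphs and the lengths of induced paths in $G$).

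I expect the main obstacle to be precisely this last combinatorial step: translating the even-connection data for an arbitrary generator $m$ of $I(G)^{s-1}$ into a rigid enough statement about $G$ itself to rule out the forbidden four-cycle in the complement. The difficulty is that $m$ can involve high multiplicities of vertices and long even-connecting walks, so the associated graph may contain many added edges whose origins are hard to track; the hypothesis $\reg(I(G)) = 3$ must be leveraged through a classification (or at least a strong structural description) of regularity-three bipartite graphs to ensure that no such walk can create a bad induced subgraph. Establishing that classification, or a usable substitute for it, and then checking that it is preserved under the even-connection operation, is where I anticipate the real work to reside.
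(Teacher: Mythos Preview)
Your overall framework (induction via colon ideals and even-connection) matches the paper's, but the key step has a genuine gap. You aim to show each colon ideal $(I(G)^s:m)$ has a \emph{linear} resolution, i.e.\ regularity $2$, by applying Fr\"oberg's criterion to the ordinary complement of the even-connection graph. This target is too strong and is false in general: if $e$ is an edge incident to a leaf (e.g.\ the first edge of $G=P_5$), no new even-connections arise through $e$, so $(I(G)^2:e)=I(G)$, which has regularity $3$ by hypothesis. What the paper actually proves is the weaker bound $\reg\bigl((I(G)^{s+1}:e_1\cdots e_s)\bigr)\le 3$. The tool is not Fr\"oberg on the ordinary complement but the Fern\'andez-Ramos--Gimenez characterization: a connected bipartite graph has regularity $\le 3$ iff its \emph{bipartite} complement has no induced cycle of length $\ge 6$. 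The paper shows the bipartite colon graph $G'$ inherits this property from $G$ by tracing a hypothetical induced $C_{2n}$ ($n\ge 3$) in $(G')^{bc}$ back to a forbidden long induced cycle in $G^{bc}$. With this, the inductive inequality yields $\reg(I(G)^{s+1})\le\max\{3+2s,\,2s+1\}=2s+3$.

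Your lower bound is also misstated: ``$\reg(I(G)^s)\ge 2s+1$ for any graph with an edge'' is false (it fails whenever $I(G)$ has linear resolution). The correct argument, which the paper gives, is that if $\reg(I(G)^{s})=2s$ then $I(G)^{s}$ has linear resolution, hence $I(G)$ has linear presentation by Nevo--Peeva, and for \emph{bipartite} graphs linear presentation forces linear resolution (Proposition~\ref{proposition:Ali}), contradicting $\reg(I(G))=3$.
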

   
  For our proof, we use the combinatorial characterization of regularity three bipartite graphs by 
  Oscar Fern\'andez-Ramos and Phillippe Gilmenez proved in~\cite{MR3199032} and the techniques 
  introduced by the second author in~\cite{arindam2014}.
\bigskip

\section{Preliminaries}

%Question:  should we define things like regularity and projective dimension?  I guess this depends on which journal we send the paper, but it might be a bit unbalanced to define all the graph terminology, and none of the algebraic stuff.....

Throughout this paper, we let $G$ be a finite simple graph with vertex set $V(G)$.
%For $u, v \in V(G)$, we let $d(u,v)$ denote the \emph{distance} between $u$ and $v$, the
%fewest number of edges that must be traversed to travel from $u$ to $v$.    
A subgraph $G' \subseteq G$ is called {\bf induced} if $uv$ is an edge of $G'$ whenever $u$ and $v$ are vertices of $G'$ and $uv$ is an edge of $G$.
The {\bf complement} of a graph $G$, for which we write $G^c$, 
is the graph on the same vertex set in which $uv$ is an edge of $G^c$ if and only if it is not an edge of $G$.  
Finally, let $C_k$ denote the cycle on $k$ vertices; a {\bf chord} is an edge which is not in the edge set of $C_k$. A cycle is called {\bf minimal} if it has no chord.  

%\begin{definition}
%Let $G$ be a graph.  We say two disjoint edges $uv$ and $xy$ form a \emph{gap} in $G$ if $G$ does not have an edge with one 
%endpoint in $\{u,v\}$ and the other in $\{x,y\}$.
%A graph without gaps is called \emph{gap-free}.  Equivalently, $G$ is gap-free if and only if $G^c$ contains no induced $C_4$.
%\end{definition}

%Thus, $G$ is gap-free if and only if it does not contain two vertex-disjoint edges as an induced subgraph.  

%this appears again later on, so i'm not sure why it's here.\dots
%\begin{proposition}
%Let $G$ be gap-free, and let $v$ be a vertex of $G$ of maximal degree.  Then $d(v, w) \leq 2$ for all vertices $w$ of $G$.  
%\end{proposition}

\begin{definition}[Bipartite Graphs]
A graph $G$ is called {\bf bipartite} if there are two disjoint independent subsets $X,Y$ of $V(G)$ whose union is $V(G)$. Note that $X\subset V(G)$ is called 
{\bf independent} if there is 
no edge $e\in E(G)$ such that $e=xy$ for some $x,y\in X$.
\end{definition}
We have the following theorem for classifying bipartite graphs. For proof see~\cite{MR1367739}.
 \begin{theorem}[K\"onig, \cite{MR1367739}, Theorem $1.2.18$]\label{theorem:konig} 
A graph $G$ is bipartite if and only if $G$ contains no odd cycle. 
\end{theorem}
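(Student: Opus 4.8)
The plan is to prove the two implications separately and directly. For the forward direction, suppose $G$ is bipartite with $V(G) = X \sqcup Y$ and let $v_0, v_1, \dots, v_{k-1}, v_0$ be any cycle in $G$. Since $X$ and $Y$ are independent, consecutive vertices of the cycle lie in opposite parts, so the vertices alternate between $X$ and $Y$ along the cycle. Returning to the start after $k$ steps forces $k$ to be even, so $G$ has no odd cycle; this direction needs no further work.

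The substance is the converse. Assuming $G$ has no odd cycle, I would construct an explicit bipartition. First I would reduce to the connected case, since a graph is bipartite exactly when each of its connected components is, and every cycle lives inside a single component. So assume $G$ is connected, fix a root $v_0 \in V(G)$, and let $d(v)$ be the graph distance (length of a shortest path) from $v_0$ to $v$. Set $X = \{v : d(v) \text{ even}\}$ and $Y = \{v : d(v) \text{ odd}\}$, so that $V(G) = X \sqcup Y$. It remains to check that $X$ and $Y$ are independent, i.e. that no edge joins two vertices of equal parity.

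The heart of the argument is converting a hypothetical monochromatic edge into an odd cycle. Suppose toward a contradiction that $uv \in E(G)$ with $d(u) \equiv d(v) \pmod 2$. Concatenating a shortest path from $v_0$ to $u$, the edge $uv$, and a shortest path from $v$ back to $v_0$ produces a closed walk of length $d(u) + 1 + d(v)$, which is odd because $d(u) + d(v)$ is even. To conclude I would prove an auxiliary lemma: \emph{every closed walk of odd length contains an odd cycle.} This goes by induction on the length of the walk. If the walk is already a cycle we are done; otherwise it visits some vertex twice, which splits it into two strictly shorter closed walks whose lengths sum to the original odd length, so at least one of them is an odd closed walk and the inductive hypothesis applies. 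Applying the lemma to our odd closed walk produces an odd cycle in $G$, contradicting the hypothesis. Hence no monochromatic edge exists, $X$ and $Y$ are independent, and $G$ is bipartite.

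The only genuine subtlety, and the step I expect to be the main obstacle, is the passage from an odd closed \emph{walk} to an odd \emph{cycle}. A naive attempt to exhibit the cycle directly from the two shortest paths and the edge $uv$ can fail, because the two paths may share vertices beyond $v_0$ and so need not bound a cycle. Routing the argument through the closed-walk lemma, rather than trying to read off the cycle explicitly, is what keeps the proof clean.
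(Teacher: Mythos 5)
Your proof is correct: the alternation argument for the forward direction, and for the converse the distance-parity bipartition together with the lemma that an odd closed walk contains an odd cycle (with the splitting induction correctly handling the fact that the two shortest paths may share vertices), is a complete and standard argument. The paper itself states this theorem without proof, citing West~\cite{MR1367739}, Theorem $1.2.18$, and your argument is essentially the classical proof found there, so there is nothing to compare beyond noting that you have supplied, correctly, the proof the paper chose to omit.
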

We also need the following definition for bipartite graphs. We take this definition from~\cite{MR3199032}.
\begin{definition}[Bipartite Complement]
 The {\bf bipartite complement} of a bipartite graph $G$ is a bipartite graph $G^{bc}$ over the same vertex set and same bipartition $V(G)=X\sqcup Y$ such that 
 $$E(G^{bc})=\{xy:x\in X,y\in Y, xy\notin G \}.$$
\end{definition}

%let's agree to write edges as (x, y)\dots

 If $G$ is a graph without isolated vertices then let $S$ denote the polynomial ring on the vertices of $G$ over some fixed field $K$.  Recall that the \emph{edge ideal} of $G$ is 
\[
I(G) = (xy: xy \text{ is an edge of } G).
\]

%I don't see a reason to talk about S_G, since we mainly refer to I(G) -- also, statements such as "S_G satisfies S_i" might be a bit confusing...

\begin{definition}
Let $S$ be a standard graded polynomial ring over a field $K$. The {\bf Castelnuovo-Mumford regularity} of a finitely generated graded $S$ module $M$, 
written $\reg(M)$ is given by $$\reg(M):= \max \{j-i|\Tor_{i} (M,K)_j \neq 0 \}$$
\end{definition}

\begin{definition}
For every $s$ we say that $I(G)^s$ is {\bf $k$-steps linear} whenever the minimal free resolution of $I(G)^s$ over the polynomial ring
is linear for $k$ steps, i.e., $\Tor_{i}^S(I(G)^s,K)_j = 0$ for all $1\leq i\leq k$ and all $j\ne i+2s$. 

In particular we say $I(G)$ {\bf has linear minimal free resolution} if the minimal free resolution is $k$-steps linear for all $k \geq 1$. 
We also say that $I(G)$ has {\bf linear presentation}
if its minimal free resolution is $1$-step linear. 
\end{definition}
 We proceed in this section by recalling a few well known results. We refer the reader to~\cite{arindam2014}  and~\cite{MR3011341}  for reference.

\begin{observation} 
Let $I(G)$ be the edge ideal of a graph $G$. Then $I(G)^s$ has linear minimal free resolution if and only if $\reg (I(G)^s)=2s$.
\end{observation}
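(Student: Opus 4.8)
The plan is to compare the two quantities directly through the graded Betti numbers $\beta_{i,j}(I(G)^s) = \dim_K \Tor_i^S(I(G)^s,K)_j$, using the single structural fact that makes the equivalence work: since $I(G)$ is generated by quadratic monomials, every minimal monomial generator of $I(G)^s$ is a product of $s$ edges and so has degree exactly $2s$. Thus $I(G)^s$ is generated in the single degree $2s$, which pins down the zeroth row of the Betti table, namely $\Tor_0^S(I(G)^s,K)_j = (I(G)^s/\mathfrak{m}I(G)^s)_j$ vanishes for $j \neq 2s$ and is nonzero for $j = 2s$ (as $I(G)^s \neq 0$).

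First I would record the universal lower bound coming from minimality of the resolution. Writing the minimal graded free resolution $F_\bullet$ with $F_i = \bigoplus_j S(-j)^{\beta_{i,j}}$, the differentials have all entries in $\mathfrak{m}$, so the generating degrees strictly increase along the resolution; since the generators sit in degree $2s$, an induction on homological degree gives $\beta_{i,j}(I(G)^s) = 0$ whenever $j < i + 2s$, that is, $\Tor_i^S(I(G)^s,K)_j \neq 0 \Rightarrow j - i \geq 2s$ for every $i \geq 0$. In particular $\reg(I(G)^s) \geq 2s$ always, with the value $2s$ already attained in homological degree $0$.

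With this in hand both implications are immediate. For the forward direction, if the minimal free resolution is linear then, by the definition of $k$-steps linear for all $k \geq 1$, we have $\Tor_i^S(I(G)^s,K)_j = 0$ for every $i \geq 1$ and every $j \neq i + 2s$; combined with the degree-$2s$ generation in homological degree $0$, every nonvanishing $\Tor$ lies on the diagonal $j - i = 2s$, so the maximum defining $\reg(I(G)^s)$ equals $2s$. For the converse, suppose $\reg(I(G)^s) = 2s$. Then $\Tor_i^S(I(G)^s,K)_j \neq 0$ forces $j - i \leq 2s$, while the minimality bound of the previous paragraph forces $j - i \geq 2s$; hence every nonvanishing $\Tor$ again satisfies $j = i + 2s$, which is exactly the statement that the resolution is linear.

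Since the whole argument rests only on $I(G)^s$ being generated in a single degree, no genuinely hard step remains; the one point to state with care is the minimality lower bound $\beta_{i,j} = 0$ for $j < i + 2s$, as it is precisely what upgrades the single regularity inequality $j - i \le 2s$ into the full diagonal conclusion $j - i = 2s$. I would therefore isolate it as a one-line lemma (generators concentrated in one degree force the $i$-th syzygies into degrees $\ge i + 2s$) and invoke it symmetrically in both directions.
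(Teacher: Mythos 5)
Your proof is correct: the key point, that generation in the single degree $2s$ together with minimality of the resolution forces $\beta_{i,j}(I(G)^s)=0$ for $j<i+2s$, is exactly what turns the one-sided regularity condition $j-i\leq 2s$ into the full diagonal conclusion $j=i+2s$, and you invoke it properly in both directions. The paper states this observation without proof, as a well-known fact, and your argument is precisely the standard justification it implicitly relies on, so there is nothing to fault and no divergence to report.
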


%\begin{lemma}
%Let $I \subseteq S$ be a monomial ideal. Then for any variable $x$, $\reg(I,x)\leq \reg(I)$. In particular if $v$ is a vertex in a graph $G$, then $\reg (I(G-v))\leq \reg((I(G))$.
%\end{lemma}

 The following theorem follows from Lemma $2.10$ of \cite{arindam2014}.

\begin{lemma}\label{exact}
Let $I \subseteq S$ be a monomial ideal, and let $m$ be a monomial of degree $d$.  Then
\[
\reg(I) \leq \max\{ \reg (I : m) + d, \reg (I,m)\}. 
\]
Moreover, if $m$ is a variable $x$ appearing in I, then $\reg(I)$ is {\it equal} to one of
these terms.
\end{lemma}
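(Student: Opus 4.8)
The plan is to derive everything from the short exact sequence of graded $S$-modules
$$0 \longrightarrow (S/(I:m))(-d) \xrightarrow{\;\cdot m\;} S/I \longrightarrow S/(I,m) \longrightarrow 0,$$
in which the first map is multiplication by $m$ (injective by the definition of the colon ideal, with image $(I,m)/I$ and cokernel $S/(I,m)$). First I would record the two standard facts I use repeatedly: for a proper nonzero homogeneous ideal $\reg(I)=\reg(S/I)+1$ (read off from $0\to I\to S\to S/I\to 0$), and for a short exact sequence $0\to A\to B\to C\to 0$ one has $\reg(B)\le\max\{\reg A,\reg C\}$. Applying the latter to the displayed sequence, together with $\reg\big((S/(I:m))(-d)\big)=\reg(S/(I:m))+d$, gives $\reg(S/I)\le\max\{\reg(S/(I:m))+d,\reg(S/(I,m))\}$; adding $1$ throughout and using $\reg(I)=\reg(S/I)+1$ yields the asserted inequality.

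For the \emph{moreover} part I would pass to the long exact sequence obtained by applying $\Tor^S_\bullet(-,K)$ to the same short exact sequence, writing $A=(S/(I:x))(-1)$, $B=S/I$, $C=S/(I,x)$, so that $\Tor_i(A,K)_j=\Tor_i(S/(I:x),K)_{j-1}$. The inequality already gives $\reg(I)\le\max\{\reg(I:x)+1,\reg(I,x)\}$, so it suffices to prove the two matching lower bounds $\reg(I,x)\le\reg(I)$ and $\reg(I:x)+1\le\reg(I)$; together with the inequality these force $\reg(I)=\max\{\reg(I:x)+1,\reg(I,x)\}$, which is exactly one of the two terms. The content of the lemma is therefore concentrated in these lower bounds, and this is precisely where the hypothesis that $m=x$ is an honest variable (rather than a general monomial) must enter: for a general $m$ only the inequality survives, and the maximum genuinely need not be attained.

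To establish the lower bounds I would exploit that $x$ is a variable by working over the subring $S'=S/(x)$, a polynomial ring on the remaining variables. Tensoring a minimal free resolution of $K$ over $S$ with the Koszul complex on $x$ realizes $S=S'[x]$ and produces a long exact sequence comparing $\Tor^S_\bullet(S/I,K)$ with the $\Tor^{S'}$ of the genuine $S'$-modules $S/(I,x)=S'/I'$ and $(I:x)/I$, both of which are annihilated by $x$. Reading off the extremal nonvanishing graded Betti numbers of $S/(I,x)$ from this sequence, and then feeding the outcome through the short exact sequence $0\to (I:x)/I\to S/I\to S/(I:x)\to 0$, I expect to exhibit homological positions at which $\Tor^S_i(S/I,K)_j$ is forced nonzero with $j-i$ equal to $\reg(I,x)-1$, respectively to $\reg(I:x)$, which deliver the two lower bounds. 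The main obstacle is exactly this bookkeeping: controlling the connecting maps so that the extremal classes computing $\reg(I,x)$ and $\reg(I:x)$ cannot cancel, and in particular handling the tie case $\reg(I:x)+1=\reg(I,x)$, where the formal short-exact-sequence inequalities are insufficient and one really needs the additional splitting afforded by $x$ being a variable.
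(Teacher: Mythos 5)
Your first paragraph is correct and matches the standard argument: the paper does not prove this lemma itself but cites Lemma 2.10 of \cite{arindam2014}, whose proof of the inequality starts from exactly the same short exact sequence. The \emph{moreover} part is where your plan has a genuine gap: you reduce it to the two lower bounds $\reg(I,x)\le\reg(I)$ and $\reg(I:x)+1\le\reg(I)$, which would establish the strictly stronger claim $\reg(I)=\max\{\reg(I:x)+1,\reg(I,x)\}$. That stronger claim is false, and so is your second lower bound. Take $I=(xz,\,yz,\,y^2)\subset K[x,y,z]$ and the variable $x$, which appears in $I$. Then $(I:x)=(z,y^2)$ is a complete intersection of degrees $1,2$, so $\reg(I:x)=2$ and $\reg(I:x)+1=3$; also $\reg(I,x)=\reg(x,yz,y^2)=2$. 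But $\reg(I)=2$: applying the already-proved inequality with the variable $z$ gives $\reg(I)\le\max\{\reg(I:z)+1,\reg(I,z)\}=\max\{\reg(x,y)+1,\reg(z,y^2)\}=2$, and $\reg(I)\ge 2$ since $I$ is generated in degree $2$. So the lemma holds with $\reg(I)=\reg(I,x)$, yet $\reg(I)<\reg(I:x)+1$: the maximum is \emph{not} attained, contrary to your parenthetical assertion that non-attainment only occurs for non-variable $m$. Consequently your third paragraph, which in any case is only a sketch (``I expect to exhibit\dots''), is chasing an unprovable bound in the colon half, and no bookkeeping of connecting homomorphisms can rescue it.

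The repair keeps only the true half of your plan. The restriction bound $\reg(I,x)\le\reg(I)$ does hold for a monomial ideal $I$ and a variable $x$ (this is the restriction lemma; your $S'=S/(x)$ Koszul comparison is essentially a proof of it, and it is the only place where the hypothesis that $m$ is a variable enters). Now use the \emph{other} formal inequality attached to the same short exact sequence, namely $\reg(A)\le\max\{\reg(B),\reg(C)+1\}$ with $A=(S/(I:x))(-1)$, $B=S/I$, $C=S/(I,x)$, which in ideal terms reads
\[
\reg(I:x)\le\max\{\reg(I)-1,\ \reg(I,x)\}.
\]
If $\reg(I)=\reg(I,x)$ we are done. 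Otherwise the restriction bound gives $\reg(I,x)<\reg(I)$; then your displayed inequality forces $\reg(I)\le\reg(I:x)+1$, while the inequality above gives $\reg(I:x)\le\reg(I)-1$, whence $\reg(I)=\reg(I:x)+1$. This yields exactly the asserted disjunction --- $\reg(I)$ equals one of the two terms --- without ever claiming that the maximum is attained, and it is the route behind the cited Lemma 2.10 of \cite{arindam2014}.
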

The following theorem due to Fr\"oberg (see Theorem $1$ of~\cite{MR1171260}, and Theorem $1.1$ of~\cite{MR3011341}) is used repeatedly throughout this paper:

\begin{theorem}[\cite{MR1171260}, Theorem $1$]\label{theorem:froberg}
The minimal free resolution of $I(G)$ is linear if and only if the complement graph $G^c$ is chordal, that is no induced cycle in $G^c$ has length greater than three.
\end{theorem}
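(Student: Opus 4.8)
The plan is to translate the homological statement into combinatorial topology through the Stanley--Reisner correspondence, and then to read chordality off the homology of induced subcomplexes. First I would observe that $I(G)$ is precisely the Stanley--Reisner ideal of the independence complex $\Delta = \mathrm{Ind}(G)$, whose faces are the independent sets of $G$; equivalently $\Delta$ is the clique (flag) complex of the complement $G^c$, since independent sets of $G$ are exactly the cliques of $G^c$. Because $I(G)$ is generated in degree $2$, its resolution is linear precisely when $\beta_{i,j}(I(G)) = \beta_{i+1,j}(S/I(G)) = 0$ for all $j \ne i+2$, i.e. when $\reg(S/I(G)) \le 1$. By Hochster's formula,
$$\beta_{i,j}(S/I(G)) = \sum_{\substack{W \subseteq V(G)\\ |W| = j}} \dim_K \widetilde{H}_{j-i-1}(\Delta_W; K),$$
where $\Delta_W = \mathrm{Ind}(G[W])$ is the induced subcomplex, that is the clique complex of $(G^c)[W] = (G[W])^c$. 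Since every singleton is a face of $\Delta$, no induced subcomplex is void and $\widetilde{H}_{-1}$ never contributes; hence linearity is equivalent to the purely topological condition that $\widetilde{H}_k(\Delta_W; K) = 0$ for every $W \subseteq V(G)$ and every $k \ge 1$.

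For the ``only if'' direction I would argue by contraposition. Suppose $G^c$ contains a chordless cycle $C$ of length $m \ge 4$, and take $W$ to be its vertex set. Then $(G^c)[W]$ is the $m$-cycle, whose only cliques are its vertices and edges (there are no triangles, as $m \ge 4$ and $C$ is induced), so $\Delta_W$ is homeomorphic to the circle $S^1$ and $\widetilde{H}_1(\Delta_W; K) \cong K \ne 0$. By the displayed formula this forces $\beta_{m-2,m}(S/I(G)) \ne 0$, a Betti number with $j - i = 2 > 1$, so the resolution of $I(G)$ is not linear.

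For the ``if'' direction, assume $G^c$ is chordal. Since induced subgraphs of chordal graphs are chordal, it suffices to prove that $\widetilde{H}_k = 0$ for $k \ge 1$ for the clique complex of every chordal graph $H$; and since higher reduced homology splits over connected components, I may assume $H$ connected. The core lemma is that the clique complex of a connected chordal graph is collapsible, hence contractible, which I would establish by induction on $|V(H)|$: by Dirac's theorem a chordal graph has a simplicial vertex $v$ (its neighborhood $N(v)$ induces a clique), and the faces containing $v$ elementarily collapse the clique complex of $H$ onto that of $H \setminus v$, which is chordal on fewer vertices, so contractibility is inherited. (Alternatively, a cone/Mayer--Vietoris argument works: the star of $v$ is a cone and its link is the full simplex on $N(v)$, so gluing reproduces the homology of $H \setminus v$.) This yields vanishing of all higher reduced homology for every induced subcomplex, completing the equivalence.

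I expect the main obstacle to be exactly this core lemma in the ``if'' direction, the higher-acyclicity (collapsibility) of clique complexes of chordal graphs. The delicate point is making the collapse precise: one must verify that, after selecting a simplicial vertex $v$, the faces containing $v$ can be removed by elementary collapses using crucially that $N(v)$ is a clique, and that the induction hypothesis then applies to $H \setminus v$. Everything else---the Stanley--Reisner identification, the bookkeeping in Hochster's formula, and the $S^1$ computation that furnishes the obstruction---is routine once this topological input is secured.
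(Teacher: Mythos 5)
The paper does not prove this statement at all: it is quoted verbatim from Fr\"oberg's 1990 paper (and Nevo--Peeva's restatement) and used as a black box, so there is no internal proof to compare against. Judged on its own merits, your proof is correct and is in fact the standard modern route to Fr\"oberg's theorem: identify $I(G)$ with the Stanley--Reisner ideal of the independence complex of $G$ (equivalently the clique complex of $G^c$), reduce linearity to $\reg(S/I(G))\leq 1$, and apply Hochster's formula so that linearity becomes vanishing of $\widetilde{H}_k(\Delta_W;K)$ for $k\geq 1$ over all induced subcomplexes. Your ``only if'' direction is airtight: an induced $m$-cycle in $G^c$ with $m\geq 4$ makes $\Delta_W$ a circle, and since Hochster's formula is a sum of nonnegative contributions, $\beta_{m-2,m}(S/I(G))\neq 0$ kills linearity. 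Your ``if'' direction via Dirac's simplicial vertex is also sound, and your instinct that this is the delicate step is right.

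One small point you should patch: in the collapsibility induction you invoke the inductive hypothesis for $H\setminus v$ after stating it only for \emph{connected} chordal graphs, without checking that $H\setminus v$ is connected. This is true and easy --- a simplicial vertex is never a cut vertex, since any path through $v$ enters and leaves via two neighbors of $v$, which are adjacent because $N(v)$ is a clique --- but it must be said. Alternatively, your parenthetical Mayer--Vietoris variant sidesteps the issue entirely if you phrase the induction as ``$\widetilde{H}_k(\Delta(H);K)=0$ for all $k\geq 1$ and \emph{all} chordal $H$'' (connected or not), splitting higher reduced homology over components inside the induction; the exact sequence with the star (a cone) and the link (the full simplex on $N(v)$, nonempty when $H$ has an edge at $v$) then gives $\widetilde{H}_k(\Delta(H))\cong\widetilde{H}_k(\Delta(H\setminus v))$ for all $k\geq 1$. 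With that repair the argument is complete, and it is arguably more transparent than chasing down Fr\"oberg's original ring-theoretic proof.
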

We also have the following results  due to Francisco, H{\`a} and Van Tuyl.
\begin{theorem}[\cite{MR3011341}, Proposition $1.3$]\label{Adam:1}
 Let $G$ be a graph and $I(G)$ be its edge ideal. Then $I(G)$ has a linear presentation if and only if $G^{c}$ has no induced $4$-cycle. 
\end{theorem}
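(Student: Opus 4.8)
The plan is to pass to the topological side via Hochster's formula and reduce the statement to a homology computation on induced subgraphs. By the definition of $1$-step linearity (the case $s=1$), $I(G)$ has a linear presentation exactly when $\Tor_1^S(I(G),K)_j=0$ for all $j\ne 3$; since a first syzygy among quadric generators has degree at least $3$, this is the single requirement that $\beta_{2,j}(S/I(G))=0$ for every $j\ge 4$. Writing $\Delta=\mathrm{Ind}(G)$ for the independence complex of $G$ (the Stanley--Reisner complex of $I(G)$, whose faces are the independent sets of $G$), Hochster's formula gives, for each $W\subseteq V(G)$,
\[
\beta_{2,W}\bigl(S/I(G)\bigr)=\dim_K \tilde H_{|W|-3}\bigl(\Delta[W];K\bigr),
\]
where $\Delta[W]=\mathrm{Ind}(G[W])$ is the restriction to the induced subgraph on $W$. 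Thus linear presentation is equivalent to the vanishing $\tilde H_{|W|-3}\bigl(\mathrm{Ind}(G[W])\bigr)=0$ for every $W$ with $|W|\ge 4$.

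The first combinatorial input I would record is the dictionary: $G^c$ \emph{has an induced $4$-cycle} if and only if $G$ \emph{has an induced} $2K_2$, i.e. four vertices $a,b,c,d$ spanning in $G$ exactly the two disjoint edges $ab$ and $cd$. This is immediate from the definition of the complement, the two diagonals of the $4$-cycle in $G^c$ becoming the two edges in $G$ and the four cycle-edges becoming non-edges. So it suffices to prove: $G$ has an induced $2K_2$ if and only if the presentation of $I(G)$ is non-linear.

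For the forward direction I would exhibit the obstruction explicitly. If $\{a,b,c,d\}$ induces $2K_2$ with edges $ab,cd$, then $\mathrm{Ind}\bigl(G[\{a,b,c,d\}]\bigr)$ has as its maximal faces the four independent pairs $\{a,c\},\{a,d\},\{b,c\},\{b,d\}$, which assemble into a $4$-cycle homeomorphic to $S^1$; hence $\tilde H_1\ne 0$ and $\beta_{2,4}(S/I(G))\ge 1$, so $\beta_{1,4}(I(G))\ne 0$ and the presentation is non-linear. Concretely this records the Koszul syzygy between the coprime generators $x_ax_b$ and $x_cx_d$, which is genuinely minimal of degree $4$ precisely because no chain of edges connects these two edges inside $\{a,b,c,d\}$.

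The converse --- no induced $2K_2$ forces $\tilde H_{|W|-3}\bigl(\mathrm{Ind}(G[W])\bigr)=0$ for all $|W|\ge 4$ --- is where the work lies and is the main obstacle. Since induced subgraphs of $2K_2$-free graphs are again $2K_2$-free, it suffices to show that for a $2K_2$-free graph $H$ on $n\ge 4$ vertices one has $\tilde H_{n-3}\bigl(\mathrm{Ind}(H)\bigr)=0$. I would split on the independence number $\alpha(H)$: when $\alpha(H)\le n-3$ the complex has dimension $<n-3$ and the statement is vacuous, so only $\alpha(H)\in\{n-2,n-1,n\}$ remain. Since $\alpha(H)+\tau(H)=n$ (with $\tau$ the minimum vertex-cover number), these correspond to all edges of $H$ being incident to at most two vertices $u,w$; the $2K_2$-free hypothesis then sharply constrains how the remaining $n-2$ independent vertices may attach to $\{u,w\}$ (for instance, if $uw\notin E(H)$ one cannot simultaneously have a neighbor private to $u$ and a neighbor private to $w$, else those two edges would induce a $2K_2$). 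The plan is to use this structure to produce a \emph{dominated} vertex --- distinct vertices with $N_H(v)\subseteq N_H(u)$ --- and to invoke the standard folding lemma, that in this situation $\mathrm{Ind}(H)\simeq \mathrm{Ind}(H-u)$, so $u$ may be deleted without changing homology; this reduces $n$ and drives an induction whose base cases collapse either to a cone over an isolated vertex or to $S^{0}$, both having no homology above degree $0$. Verifying that $2K_2$-freeness always supplies such a fold (or an outright cone point) throughout the critical range $\alpha(H)\ge n-2$ is the crux, and the step I expect to demand the most care.
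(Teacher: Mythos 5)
The paper does not actually prove this statement: it is imported verbatim as Proposition $1.3$ of Nevo--Peeva \cite{MR3011341}, so your proposal can only be compared with the cited source, not with an in-paper argument. Your route is sound and essentially the standard one for such results. The translation of linear presentation into $\beta_{2,j}(S/I(G))=0$ for all $j\geq 4$ is correct (a minimal first syzygy of an ideal generated by quadrics has degree at least $3$), Hochster's formula is applied correctly with the right homological shift $\tilde H_{|W|-3}$, the dictionary ``induced $C_4$ in $G^c$ if and only if induced $2K_2$ in $G$'' is right, and in the forward direction the four facets $\{a,c\},\{a,d\},\{b,c\},\{b,d\}$ do form a simplicial circle recording the minimal degree-$4$ Koszul syzygy of $x_ax_b$ and $x_cx_d$. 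What your proof buys over the citation is a self-contained, purely combinatorial-topological argument; what the citation buys the paper is brevity, since the result also follows from the more general Eisenbud--Green--Hulek--Popescu characterization of the property $N_{2,p}$ for edge ideals.

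Two points in the converse, both inside the step you yourself flagged as the crux, need repair before the outline is complete. First, the induction as stated has a degree mismatch: Gallai's identity $\alpha(H)+\tau(H)=n$ reduces you to a two-vertex cover $\{u,w\}$ (so $V(H)\setminus\{u,w\}$ is independent), and when $uw\notin E(H)$ your private-neighbor observation correctly forces $N(w)\subseteq N(u)$ (or symmetrically), whence the fold gives $\tilde H_{n-3}(\mathrm{Ind}(H))\cong \tilde H_{n-3}(\mathrm{Ind}(H-u))$; but the inductive hypothesis on $n-1$ vertices controls $\tilde H_{n-4}$, not $\tilde H_{n-3}$, so the induction does not close as written. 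It does not need to: $H-u$ is a star plus isolated vertices, whose independence complex is contractible, or has an isolated cone point, or is a simplex together with an isolated point, so its reduced homology is concentrated in degree $0<n-3$, and one finishes directly. Second, when $uw\in E(H)$ no pair of disjoint edges is induced, so $2K_2$-freeness is automatic and supplies no dominated vertex ``for free''; here one should compute outright. Writing $A$ and $B$ for the private neighborhoods of $u$ and $w$, the complex $\mathrm{Ind}(H)$ is the simplex on $V(H)\setminus\{u,w\}$ with the two cones from $u$ and $w$ attached along the (contractible or empty) simplices on $B$ and $A$ respectively, and any vertex adjacent to neither $u$ nor $w$ is a cone point; in every case all reduced homology again sits in degree $0$, so $\tilde H_{n-3}$ vanishes for $n\geq 4$. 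With these two local repairs, your proposal is a correct and complete proof.
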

\begin{theorem}[\cite{MR3011341}, Theorem $1.8$]\label{Adam:2}
 If ${I(G)}^s$ has a linear minimal resolution for a graph $G$ and for some $s\geq 1$, then $I(G)$ has a linear presentation. 
\end{theorem}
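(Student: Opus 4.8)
The plan is to prove the contrapositive formulation: I will show that if $I(G)^{s}$ admits a linear minimal resolution for some $s\ge 1$, then $G$ contains no induced $2K_{2}$, and then invoke Theorem \ref{Adam:1} (via the dictionary that an induced four-cycle in $G^{c}$ is exactly an induced $2K_{2}$ in $G$) to conclude that $I(G)$ has a linear presentation. Concretely I argue the contrapositive of the first implication: assuming $G$ has an induced $2K_{2}$, say on distinct vertices $x,y,z,w$ with $xy,zw\in E(G)$ and $xz,xw,yz,yw\notin E(G)$, I will exhibit for every $s\ge 1$ a minimal first syzygy of $I(G)^{s}$ in internal degree $2s+2$. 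Since $I(G)^{s}$ is generated entirely in degree $2s$, a linear presentation would force every minimal first syzygy into degree $2s+1$; producing one in degree $2s+2$ therefore rules out a linear presentation, hence a fortiori a linear minimal resolution.

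The explicit syzygy comes from the two disjoint edges. Set $u_{j}=(xy)^{j}(zw)^{s-j}=x^{j}y^{j}z^{s-j}w^{s-j}$ for $0\le j\le s$. Each $u_{j}$ is a product of $s$ edges of $G$, hence a generator of $I(G)^{s}$, and since all generators have degree $2s$ each $u_{j}$ is in fact a minimal generator. Fixing $i$ with $1\le i\le s$, the identity $xy\cdot u_{i-1}=zw\cdot u_{i}$ gives the syzygy $\sigma_{i}=xy\,\epsilon_{i-1}-zw\,\epsilon_{i}$ on the corresponding basis elements; its fine multidegree is $\mathbf a=x^{i}y^{i}z^{s-i+1}w^{s-i+1}$, of total degree $2s+2$. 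Working in the $\Z^{|V(G)|}$-grading, the goal becomes showing $\beta_{1,\mathbf a}(I(G)^{s})\ne 0$.

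The heart of the argument is a support restriction forced by the induced-subgraph hypothesis. Because $x^{\mathbf a}$ involves only the variables $x,y,z,w$, any minimal generator of $I(G)^{s}$ dividing $x^{\mathbf a}$ must be a product of $s$ edges lying entirely inside $\{x,y,z,w\}$; as the subgraph induced on these four vertices is exactly $2K_{2}$, the only available edges are $xy$ and $zw$, so every such generator equals some $u_{j}$. A short divisibility check narrows these further to precisely $u_{i-1}$ and $u_{i}$, whose least common multiple is exactly $x^{\mathbf a}$. It follows that the degree-$\mathbf a$ component of the first syzygy module is one-dimensional, spanned by $\sigma_{i}$, whereas in every strictly smaller multidegree $\mathbf b\lneq\mathbf a$ at most one of $u_{i-1},u_{i}$ divides $x^{\mathbf b}$, so no nontrivial syzygy lives there. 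Hence $\sigma_{i}$ is not a combination of lower-degree syzygies, which gives $\beta_{1,\mathbf a}(I(G)^{s})\ne 0$ and therefore $\beta_{1,2s+2}(I(G)^{s})\ne 0$.

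I expect the minimality claim to be the main obstacle. In the ambient ideal $I(2K_{2})^{s}=(xy,zw)^{s}$ the relations $\sigma_{i}$ are transparently the Koszul-type syzygies of a codimension-two complete intersection and are minimal; the risk in passing to the larger ideal $I(G)^{s}$ is that extra generators could interact in multidegree $\mathbf a$ and make $\sigma_{i}$ redundant. What rules this out is precisely that $\mathbf a$ is supported on $\{x,y,z,w\}$ together with the hypothesis that $G$ induces no edges among these four vertices beyond $xy$ and $zw$; verifying carefully that no generator using a vertex outside $\{x,y,z,w\}$ can divide $x^{\mathbf a}$ is the step demanding the most care.
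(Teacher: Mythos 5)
Your proof is correct, and it is essentially the same argument as in the source the paper cites for this statement (the paper itself gives no proof, quoting Nevo--Peeva \cite{MR3011341}, Theorem 1.8): one restricts to an induced $2K_2$ in $G$ (equivalently, an induced four-cycle in $G^c$, matching Theorem~\ref{Adam:1}) and exhibits the Koszul-type relation between $u_{i-1}=(xy)^{i-1}(zw)^{s-i+1}$ and $u_i=(xy)^i(zw)^{s-i}$ as a minimal first syzygy of $I(G)^s$ in degree $2s+2$, using that multigraded Betti numbers in a degree supported on $\{x,y,z,w\}$ see only generators supported there. Your key verifications --- that the only minimal generators dividing $x^iy^iz^{s-i+1}w^{s-i+1}$ are $u_{i-1}$ and $u_i$, and that their least common multiple equals that multidegree, so the syzygy cannot come from strictly smaller degrees --- are exactly what makes the minimality claim go through, so there is no gap.
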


The following theorem due to Herzog, Hibi and Zheng is necessary for our results.
\begin{theorem}[\cite{MR2091479}, Theorem $3.2$]\label{Herzog:1}
 Let $I(G)$ be the edge ideal of a graph $G$. If $I(G)$ has linear minimal free resolution, then so has $I(G)^{s}$ for each $s\geq 2$. 
 \end{theorem}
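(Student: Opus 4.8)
The plan is to translate the statement into a question about regularity and argue by induction on $s$. By the observation recalled above, $I(G)^s$ has a linear minimal free resolution if and only if $\reg(I(G)^s)=2s$, so it suffices to prove $\reg(I(G)^s)=2s$ for every $s\ge 1$. Since $I(G)^s$ is generated in degree $2s$, its minimal generators already force $\Tor_0^S(I(G)^s,K)_{2s}\ne 0$, whence $\reg(I(G)^s)\ge 2s$ automatically; thus the entire content is the upper bound $\reg(I(G)^s)\le 2s$. The base case $s=1$ is exactly the hypothesis: $I(G)$ has a linear resolution, so $\reg(I(G))=2$, and by Theorem~\ref{theorem:froberg} this is equivalent to the complement $G^c$ being chordal, a fact I would keep in reserve for the inductive step.

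For the inductive step I would use the colon-ideal technique of~\cite{arindam2014}. Combining Lemma~\ref{exact} with the analysis of colons by generators carried out in~\cite{arindam2014} yields a bound of the shape
\[
\reg\bigl(I(G)^{s+1}\bigr)\le \max\Bigl(\reg\bigl(I(G)^{s}\bigr),\ \max_{m}\ \reg\bigl(I(G)^{s+1}:m\bigr)+2s\Bigr),
\]
where $m$ runs over the minimal (degree-$2s$) generators of $I(G)^{s}$. Granting the inductive hypothesis $\reg(I(G)^{s})=2s\le 2s+2$, the first term is harmless, and the step reduces to showing that for every such $m$ one has $\reg\bigl(I(G)^{s+1}:m\bigr)\le 2$; since each colon ideal is again generated in degree $2$, this is precisely the assertion that $I(G)^{s+1}:m$ has a linear resolution.

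Here the structural input of~\cite{arindam2014} enters: each colon ideal $I(G)^{s+1}:m$ is itself the edge ideal of a graph $\widetilde{G}$ on $V(G)$ whose edges are those of $G$ together with all pairs of vertices that are \emph{even-connected} with respect to $m$ (in the sense of~\cite{arindam2014}). By Theorem~\ref{theorem:froberg}, proving $\reg(I(\widetilde{G}))=2$ is equivalent to showing that the complement $\widetilde{G}^{c}$ is chordal. Thus the whole induction rests on a single combinatorial lemma: \emph{if $G^c$ is chordal, then for every minimal generator $m$ of a power of $I(G)$ the even-connection graph $\widetilde{G}$ again has chordal complement.} I would attempt this by analysing how the added even-connection edges sit inside $G^c$: deleting them from $G^c$ must not create an induced cycle of length at least four, and I would verify this by tracing a hypothetical chordless cycle of $\widetilde{G}^{c}$ back through an even-connecting path in $G$ to produce a forbidden induced cycle already in $G^c$, contradicting chordality.

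I expect this last lemma to be the main obstacle, for two reasons. First, chordality is not preserved under arbitrary edge deletion, so one genuinely must use the special combinatorial form of even-connection rather than any soft monotonicity argument; the absent chords of $\widetilde{G}^{c}$ are exactly the even-connection edges, and these must be shown to be harmless. Second, the bookkeeping ranges over all $m$ and all $s$ at once, so the argument has to be uniform; the natural route is to describe the even-connection edges intrinsically and show that they can only link vertices whose removal from $G^c$ leaves chordality intact. Once this lemma is in place the induction closes: $\reg(I(G)^{s+1})\le 2s+2$, hence $\reg(I(G)^{s+1})=2(s+1)$, and by the observation above every power $I(G)^s$ has a linear minimal free resolution. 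I note that bipartiteness is never used, consistent with the theorem holding for arbitrary graphs.
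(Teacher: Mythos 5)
Your proposal is essentially correct, but it is a genuinely different proof from the one the paper relies on: Theorem~\ref{Herzog:1} is quoted from Herzog--Hibi--Zheng~\cite{MR2091479}, whose original argument involves no colon ideals or regularity induction at all. They combine Fr\"oberg's theorem with Dirac's characterization of chordal graphs by perfect elimination orderings to show that $I(G)$ has \emph{linear quotients}, and then prove that the linear-quotient property is inherited by every power $I(G)^s$; for ideals generated in a single degree, linear quotients imply linear resolution, so the theorem follows in one stroke. What you sketch instead is Banerjee's later proof from~\cite{arindam2014}, and the present paper has in fact already packaged both of your ingredients: the regularity inequality you write down is Theorem~\ref{theorem:upperbound}, and your ``single combinatorial lemma'' --- that chordality of $G^c$ forces the colon ideals $(I(G)^{s+1}:m)$ to again have linear resolution --- is precisely Corollary~\ref{lemma:arindamlem}, i.e.\ Lemmas $6.14$ and $6.15$ of~\cite{arindam2014}. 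You correctly identify that lemma as the crux (chordality is indeed not preserved by arbitrary edge deletion from $G^c$), so granting it, your induction closes exactly as you describe. What each approach buys: the quotient-order proof of~\cite{MR2091479} is shorter and works uniformly for all monomial ideals generated in degree two, while the colon-ideal route yields the quantitative machinery (Theorem~\ref{theorem:upperbound}, Theorem~\ref{theorem:new}) that this paper actually needs for the regularity-three case, where resolutions are not linear and linear quotients give nothing.

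One concrete wrinkle in your write-up: the assertion that each colon ideal $(I(G)^{s+1}:m)$ ``is itself the edge ideal of a graph $\widetilde{G}$'' is not quite right for arbitrary $G$. A vertex $u$ may be even-connected \emph{to itself} with respect to $m$ (via an odd closed walk through the edges of $m$), which produces a non-squarefree generator $u^2$; the paper's Proposition~\ref{col:mycol} excludes this only under the bipartite hypothesis. Since you are proving the theorem for all graphs, you must either pass to the polarization of the colon ideal --- which is exactly what Lemma $6.15$ of~\cite{arindam2014} handles, showing the polarized ideal is the edge ideal of a graph with chordal complement --- or otherwise account for the squared generators in the regularity bound. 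This is a repairable gap rather than a fatal one, but your closing remark that ``bipartiteness is never used'' is accurate only after this squarefreeness issue is dealt with.
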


\begin{definition}
For any graph $G$, we write $\reg(G)$ as shorthand for $\reg(I(G))$.  
\end{definition}

%The next proposition is Proposition $3.2$ of~\cite{MR3070118}. 

%\begin{proposition}\label{distance2}
%Let $G$ be gap-free, and let $x$ be a vertex of $G$ of highest degree.  Then $d(x, y) \leq 2$ for all vertices $y$ of $G$.  
%\end{proposition}

% We next prove a few results about gap free bipartite graphs that is useful for our purpose.
 
%\begin{proposition}
%If $G$ is a bipartite graph then its complement does not have any induced cycle of length greater that four. 
%In  particular if an edge ideal of a bipartite graph has linear presentation, all its powers have linear resolution.
%\end{proposition}

%{\bf Added by Ali July 7th}

The following proposition which we state without. 
\begin{proposition}[\cite{MR3199032}, Theorem $3.1$]\label{prop:myprop}
Let $G$ be a connected bipartite graph. The edge ideal $I(G)$ has regularity $3$ if and only if $G^{c}$ has at least one induced cycle of length $\geq 4$ and $G^{bc}$ does not 
contain 
any induced cycle of length $\geq 6$. 
\end{proposition}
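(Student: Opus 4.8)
The plan is to split the statement into three pieces after one preliminary observation: the hypothesis ``$G^{bc}$ has no induced cycle of length $\ge 6$'' is exactly the condition that $G^{bc}$ be \emph{chordal bipartite}, since a bipartite graph has only even cycles and an induced $C_4$ admits no chord, so forbidding induced $C_{\ge 6}$ is the same as requiring every cycle of length $\ge 6$ to have a chord. Writing $\reg(I(G))=\reg(S/I(G))+1\ge 2$, I would first dispose of the ``regularity $\ge 3$'' part: by Theorem~\ref{theorem:froberg}, $\reg(I(G))=2$ if and only if $G^{c}$ is chordal, so $\reg(I(G))\ge 3$ \emph{if and only if} $G^{c}$ has an induced cycle of length $\ge 4$. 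Thus the proposition reduces to two implications: (A) if $G^{bc}$ has an induced cycle of length $\ge 6$ then $\reg(I(G))\ge 4$; and (B) if $G^{bc}$ is chordal bipartite then $\reg(I(G))\le 3$. Combining (A), (B) and the Fr\"oberg equivalence yields the biconditional.

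For (A) I would argue through Hochster's formula, which expresses $\reg(S/I(G))$ as the largest $k+1$ for which some induced subgraph $G[W]$ has $\widetilde{H}_{k}(\mathrm{Ind}(G[W]);K)\neq 0$, where $\mathrm{Ind}$ is the independence complex; in particular $\reg$ is monotone under passing to induced subgraphs. Given an induced $C_{2t}$ ($t\ge 3$) in $G^{bc}$ on $x_1,\dots,x_t\in X$ and $y_1,\dots,y_t\in Y$, being induced translates back into $G$ as: the induced subgraph $H:=G[\{x_i,y_j\}]$ has $x_iy_j$ an edge precisely when $\{x_i,y_j\}$ is not a cycle edge, i.e. $H$ is the bipartite complement of $C_{2t}$, with $x_i\sim y_j\iff j\notin\{i-1,i\}$. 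The key computation is that $\mathrm{Ind}(H)\simeq S^{2}$: each colour class spans a full simplex $\sigma_X,\sigma_Y$, the only mixed faces are the independent pairs $\{x_i,y_j\}$ with $j\in\{i-1,i\}$ together with the triangles $\{x_i,y_{i-1},y_i\}$ and $\{x_i,x_{i+1},y_i\}$, and these assemble into a triangulated cylinder over the $2t$-cycle whose $x$-boundary and $y$-boundary circles bound in $\sigma_X$ and $\sigma_Y$ respectively; capping with these two discs gives a $2$-sphere. Hence $\widetilde{H}_2(\mathrm{Ind}(H))\neq 0$, so $\reg(S/I(H))\ge 3$, i.e. $\reg(I(H))\ge 4$, and by monotonicity $\reg(I(G))\ge 4$.

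For (B) I would induct on $|V(G)|$ using the splitting Lemma~\ref{exact} at a vertex $x$, giving $\reg(I(G))\le\max\{\reg(I(G):x)+1,\ \reg(I(G),x)\}$. The second term equals $\reg(I(G\setminus x))$, and since $(G\setminus x)^{bc}$ is an induced subgraph of the chordal bipartite graph $G^{bc}$ it is again chordal bipartite, so the inductive hypothesis bounds it by $3$. For the first term I would use the standard identity $I(G):x=(N_G(x))+I(G\setminus N_G[x])$ (closed neighbourhood $N_G[x]$), whose two summands have disjoint variable support, so that $\reg(I(G):x)=\reg(I(G\setminus N_G[x]))$. It then suffices to choose $x$ so that $G\setminus N_G[x]$ has a \emph{linear} resolution, i.e. by Theorem~\ref{theorem:froberg} so that $(G\setminus N_G[x])^{c}$ is chordal, for then $\reg(I(G):x)+1\le 3$. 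The vertex $x$ I would extract from the elimination structure of chordal bipartite graphs: such a graph has a bisimplicial edge $uv$ (one with $N(u)\cup N(v)$ inducing a complete bipartite graph), and I expect the appropriate endpoint, read back into $G$, to force $G\setminus N_G[x]$ to have chordal complement.

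The main obstacle is exactly this last step of (B), and it has two delicate aspects. First, since $\reg(I(G):x)\le 2$ is equivalent to $G\setminus N_G[x]$ having a linear resolution \emph{with no slack}, the vertex choice is genuinely forced by the structure of $G^{bc}$ and must be justified from its elimination ordering (equivalently a doubly lexical, $\Gamma$-free biadjacency description). Second, because regularity is additive over connected components, deleting $x$ must not produce a disjoint union whose summands accumulate regularity beyond $3$; I would handle this by choosing $x$ so that $G\setminus x$ remains connected, and by showing the chordal bipartite hypothesis on $G^{bc}$ precludes $G$ from containing an induced disjoint union of two subgraphs each of regularity $3$. Establishing these structural facts about chordal bipartite graphs is where the real work lies; everything else follows formally from Fr\"oberg's theorem, Hochster's formula, and Lemma~\ref{exact}.
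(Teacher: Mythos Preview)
The paper does not prove this proposition at all: it is quoted from \cite{MR3199032} and, as the text just above it indicates (``which we state without [proof]''), no argument is supplied. So there is no in-paper proof to compare your proposal to; what follows is an assessment of your sketch on its own terms.

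Your trichotomy (Fr\"oberg for the ``$\reg\ge 3$'' part, then (A) and (B)) is the right decomposition. Part (A) is essentially correct, but one claim is misstated. For $t=3$ the induced subgraph $H$ is three disjoint edges and $\mathrm{Ind}(H)$ is indeed the octahedral $S^{2}$. For $t\ge 4$, however, the full simplices $\sigma_{X}$ and $\sigma_{Y}$ have dimension $t-1\ge 3$, so $\mathrm{Ind}(H)$ is not a $2$-sphere. What survives, and is all you need, is $\widetilde{H}_{2}(\mathrm{Ind}(H))\ne 0$: your annulus of $2t$ mixed triangles has boundary the $x$-cycle minus the $y$-cycle; cap each with a $2$-chain in $\sigma_{X}$ (respectively $\sigma_{Y}$) to obtain a $2$-cycle $z$. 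Every $3$-face of $\mathrm{Ind}(H)$ lies entirely in $\sigma_{X}$ or in $\sigma_{Y}$ (there are no mixed faces of dimension $\ge 3$, as you verified), so any $2$-boundary is supported on $\sigma_{X}\cup\sigma_{Y}$; since $z$ contains the mixed triangles, it is not a boundary. Hence $\reg(I(H))\ge 4$ and monotonicity finishes (A).

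Part (B) is where your proposal is genuinely incomplete, and you say so yourself. Two points deserve emphasis. First, the additivity issue is not a side remark but a structural obstacle: for a disjoint union one has $\reg(S/I(G_{1}\sqcup G_{2}))=\reg(S/I(G_{1}))+\reg(S/I(G_{2}))$, so if deleting your chosen vertex disconnects $G$ into two pieces each with $\reg(I)=3$, the bound from $\reg(I(G),x)=\reg(I(G\setminus x))$ is $5$, and the induction collapses. You must either prove that the chordal-bipartite hypothesis on $G^{bc}$ forbids such a split, or choose $x$ to avoid it; neither is done. Second, the step ``pick a bisimplicial edge of $G^{bc}$ and read an endpoint back into $G$ so that $(G\setminus N_{G}[x])^{c}$ is chordal'' is the entire content of (B) and is asserted only as an expectation. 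Note that for bipartite $H$, chordality of $H^{c}$ is equivalent to $H$ containing no induced $2K_{2}$ (a ``chain'' bipartite graph), so you are really claiming that some vertex $x$ has $G\setminus N_{G}[x]$ with linearly ordered neighbourhoods; extracting such an $x$ from an elimination ordering of $G^{bc}$ requires a concrete argument that you have not supplied. Until those two points are settled, (B) remains a plan rather than a proof.
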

Finally we mention the following theorem from~\cite{arindam2014} without proof. This will be the most important structural tool for our proof. 
This theorem shows that the powers of edge ideals have a 
very special property regarding short exact sequences, which makes the task of finding upper bounds for regularity easier.
\begin{theorem}[\cite{arindam2014}, Theorem $5.2$]\label{theorem:upperbound}
For any finite simple graph $G$ and any $s\geq 1$, let the set of minimal monomial generators of $I(G)^s$ be $\{m_1,\dots,m_k\}$, 
then $$\reg(I(G)^{s+1}) \leq \max \{ \reg (I(G)^{s+1} : m_l)+2s, 1\leq l \leq k, \reg ( I(G)^s)\}.$$ 
\end{theorem}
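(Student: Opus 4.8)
The plan is to prove the inequality by iterating Lemma \ref{exact} along a filtration of $I(G)^{s+1}$ by the generators of $I(G)^{s}$. Write $J=I(G)^{s+1}$, and note that $J\subseteq I(G)^{s}=(m_1,\dots,m_k)$ because $I(G)^{s+1}=I(G)\cdot I(G)^{s}$; also each $m_l$ has degree $2s$. Set $J_0=J$ and $J_l=(J,m_1,\dots,m_l)$ for $1\le l\le k$, so that $J_k=(J,m_1,\dots,m_k)=I(G)^{s}$. Applying Lemma \ref{exact} to $J_{l-1}$ with the degree-$2s$ monomial $m_l$ gives
\[
\reg(J_{l-1})\le\max\{\reg(J_{l-1}:m_l)+2s,\ \reg(J_l)\}.
\]
Chaining these for $l=1,\dots,k$ and using $\reg(J_k)=\reg(I(G)^{s})$ telescopes to
\[
\reg(J)\le\max\big(\{\reg(J_{l-1}:m_l)+2s:1\le l\le k\}\cup\{\reg(I(G)^{s})\}\big).
\]

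This already has the right shape, but the colon ideals that appear are the \emph{accumulated} colons $J_{l-1}:m_l$, whereas the statement asks for the \emph{clean} colons $J:m_l$. These genuinely differ: for monomial ideals one has
\[
J_{l-1}:m_l=(J:m_l)+\sum_{j=1}^{l-1}\big(m_j/\gcd(m_j,m_l)\big),
\]
and writing $g_j=m_j/\gcd(m_j,m_l)$, the generator $g_j$ need not lie in $J:m_l$ (for a single edge-times-path example one checks $g_j$ can even have degree below $2$, so $J_{l-1}:m_l$ is not generated in degree $2$ while $J:m_l\supseteq I(G)$ is). Thus the second, essential step is a comparison lemma: $\reg(J_{l-1}:m_l)\le\max_{1\le j\le k}\reg(J:m_j)$. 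The structural input I would use is that, although $g_j\notin J:m_l$, one always has $g_j\cdot I(G)\subseteq J:m_l$; indeed $g_j m_l=\mathrm{lcm}(m_j,m_l)$ is a multiple of the generator $m_j$, hence lies in $I(G)^{s}$, so $g_j m_l\cdot I(G)\subseteq I(G)^{s+1}=J$. Equivalently, $I(G)\subseteq (J:m_l):g_j=J:\mathrm{lcm}(m_j,m_l)$, and the latter is itself a colon of $J$ by a monomial lying in $I(G)^{s}$.

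To turn this into the comparison I would adjoin the $g_j$ to $A:=J:m_l$ one at a time and use the companion form of Lemma \ref{exact} coming from the short exact sequence $0\to S/(A:g)(-\deg g)\to S/A\to S/(A,g)\to 0$, namely $\reg(A,g)\le\max\{\reg(A),\ \reg(A:g)+\deg g-1\}$. The nested colons that arise are again of the form $J:w$ with $w\in I(G)^{s}$, every one of which contains $I(G)$ and, by the divisibility above, is controlled by the clean colons $J:m_i$. The main obstacle is precisely this bookkeeping: one must show that the low-degree extra generators $g_j$, together with the additive constants $\deg g_j-1$ produced at each adjunction, never push the regularity of $J_{l-1}:m_l$ above $\max_i\reg(J:m_i)$; this is where the containment $g_j I(G)\subseteq J:m_l$ does the real work, preventing the accumulation from escaping the clean colons. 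Once the comparison $\reg(J_{l-1}:m_l)\le\max_i\reg(J:m_i)$ is in hand, substituting it into the telescoped inequality yields
\[
\reg(I(G)^{s+1})\le\max\big(\{\reg(I(G)^{s+1}:m_l)+2s:1\le l\le k\}\cup\{\reg(I(G)^{s})\}\big),
\]
which is the assertion.
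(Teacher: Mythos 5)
Your first move---filtering $I(G)^{s+1}$ through $J_l=(I(G)^{s+1},m_1,\dots,m_l)$, applying Lemma~\ref{exact} with the degree-$2s$ monomial $m_l$ at each stage, and telescoping down to $J_k=I(G)^s$---is exactly the mechanism on which this theorem rests (note the present paper states the result without proof, citing Theorem 5.2 of \cite{arindam2014}, whose proof runs through these same short exact sequences). You have also correctly isolated the genuine subtlety that a naive reading misses: the filtration produces the accumulated colons $J_{l-1}:m_l=(J:m_l)+\sum_{j<l}\big(m_j/\gcd(m_j,m_l)\big)$ rather than the clean colons $J:m_l$ of the statement, and these really do differ; for instance for $K_{1,3}$ with edges $xy_1,xy_2,xy_3$ and $s=2$, taking $m_j=x^2y_1^2$ and $m_l=x^2y_2y_3$ gives the extra generator $g_j=y_1^2$, which is not in $J:m_l$ and is not a variable.

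The problem is that your proposal stops exactly where the content of the theorem lies. The comparison $\reg(J_{l-1}:m_l)\le\max_i\reg(J:m_i)$ is asserted, not proved: you yourself call the bookkeeping ``the main obstacle,'' and the only input you offer is the containment $g_jI(G)\subseteq J:m_l$, which shows $I(G)\subseteq (J:m_l):g_j$ but is not by itself any bound on regularity. Moreover, the recursion you sketch does not close. Adjoining $g_1,\dots,g_{l-1}$ to $A=J:m_l$ one at a time produces the colons $(A,g_1,\dots,g_{j-1}):g_j=(A:g_j)+\sum_{i<j}(g_i:g_j)$, so the accumulation problem you were trying to eliminate reappears verbatim one level down; the claim that ``the nested colons that arise are again of the form $J:w$'' is false as stated. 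Even the clean parts $A:g_j=J:\mathrm{lcm}(m_j,m_l)$ are colons by monomials of $I(G)^s$ that are \emph{not} minimal generators, and nothing in the statement, in Lemma~\ref{exact}, or in your sketch bounds $\reg(J:w)$ for such non-minimal $w$ by $\max_i\reg(J:m_i)$. So what you have is a correct skeleton with the load-bearing step missing: one must genuinely control the extra quotient generators---for example by exploiting the freedom in ordering $m_1,\dots,m_k$ and reducing the extra generators to variables, which cannot raise the regularity of a monomial ideal---and that is precisely the work done in the proof of Theorem 5.2 of \cite{arindam2014} that your proposal defers.
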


We next prove a result about bipartite graphs that is useful for our purpose.
 
\begin{proposition}\label{proposition:Ali}
If $G$ is a bipartite graph then its complement does not have any induced cycle of 
length $>4$. In  particular edge ideal of a bipartite graph has linear presentation, if and only if all its powers have linear resolution.
\end{proposition}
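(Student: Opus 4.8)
The plan is to prove the two assertions separately: first the purely combinatorial claim that the complement of a bipartite graph has no induced cycle of length greater than $4$, and then to deduce the homological equivalence by combining this with the cited results of Fr\"oberg, Herzog--Hibi--Zheng and Francisco--H\`a--Van Tuyl.

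For the first part I would start from the defining feature of bipartiteness. Write $V(G)=X\sqcup Y$ with $X$ and $Y$ independent in $G$. Since no two vertices of $X$ are adjacent in $G$, every pair of vertices of $X$ is an edge of $G^c$; thus $X$ induces a clique in $G^c$, and likewise $Y$ induces a clique in $G^c$. Now suppose toward a contradiction that $G^c$ contains an induced cycle $C$ of length $k\geq 5$. Being induced, $C$ is chordless, so no three of its vertices form a triangle. But any three vertices of $C$ lying in $X$ would be pairwise adjacent in $G^c$ (as $X$ is a clique), hence would form a triangle, a contradiction. Therefore $C$ meets $X$ in at most two vertices, and by the same argument meets $Y$ in at most two vertices, forcing $k=|C|\leq 4$. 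This contradiction shows that $G^c$ has no induced cycle of length $>4$.

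For the second part I would chain the stated theorems. Suppose $I(G)$ has a linear presentation. By Theorem~\ref{Adam:1} this is equivalent to $G^c$ having no induced $4$-cycle, and together with the first part it follows that $G^c$ has no induced cycle of length $\geq 4$ at all, i.e.\ $G^c$ is chordal. Fr\"oberg's Theorem~\ref{theorem:froberg} then gives that $I(G)$ has a linear resolution, and Theorem~\ref{Herzog:1} of Herzog--Hibi--Zheng propagates this to every power, so $I(G)^s$ has a linear resolution for all $s\geq 1$. Conversely, if all powers of $I(G)$ have linear resolution then in particular $I(G)=I(G)^1$ does, and a linear resolution is a fortiori $1$-step linear, so $I(G)$ has a linear presentation (alternatively one may invoke Theorem~\ref{Adam:2}). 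This establishes the equivalence.

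The only step requiring a genuine argument is the clique observation in the first part; everything else is an assembly of the cited structural theorems, so I expect no substantive obstacle. The point to be careful about is that Theorem~\ref{Adam:1} only rules out induced $4$-cycles in $G^c$, so the bipartite input from the first part, which eliminates the longer induced cycles, is exactly what upgrades ``no induced $4$-cycle'' to the full chordality of $G^c$ needed to apply Fr\"oberg's theorem.
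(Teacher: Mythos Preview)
Your proof is correct and follows essentially the same approach as the paper: the paper also observes that $X$ and $Y$ induce cliques in $G^c$, so any cycle of length $\geq 5$ in $G^c$ contains at least three vertices from one side by pigeonhole and hence cannot be induced, and then cites Theorems~\ref{theorem:froberg}, \ref{Adam:1} and \ref{Herzog:1} for the second assertion. Your write-up simply spells these steps out in more detail.
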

\begin{proof}
 Let $G$ be a bipartite graph and $V(G)=X\sqcup Y$ be a bipartition of $G$. 
%\begin{eqnarray*}
 %X=\{x_1,\dots,x_n\}&\text{and}&Y=\{y_1,\dots,y_n\}.
 %\end{eqnarray*}
Note since $G^{c}$ contains the complete graph over $X$ and $Y$, 
then we can say every cycle in $G^{c}$ of length $\geq 5$  has at least three $x$'s or three $y$'s. 
Hence it cannot be induced. The second part follows directly from  Theorem~\ref{theorem:froberg}, Theorem~\ref{Adam:1}
and Theorem~\ref{Herzog:1}. 
\end{proof}

\section{Bounding the regularity: The results}
\medskip

 In this section we give some new bounds on $\reg (I(G)^s)$ for biparite graphs $G$ for which $\text{reg}(I(G))=3$. The main idea is to use Proposition 2.15 
 and Theorem 2.16 and the analysis of the ideal 
 $(I(G)^{s+1}:e_1\dots e_s)$ for an arbitrary $s$-fold product of edges (i.e. for $i \neq j$, $e_i=e_j$ is a possibility) in the spirit of~\cite{arindam2014}. 
 Now any $s$-fold product can be written as a product of $s$ edges in various ways. In this section we fix a presentation and work with respect to that. 
We first mention the following important result proved in~\cite{arindam2014} which says that these ideals are generated in degree two for any graph $G$.
 
\begin{theorem}\label{theorem:arindamgraph}
For any graph $G$ and for any $s$-fold product $e_1\dots  e_s$ of edges in $G$ (with the possibility of $e_i$ being same as $e_j$ as an edge for $i \neq j$), 
the ideal 
$$(I(G)^{s+1} :e_1\dots e_s)$$ 
is generated by monomials of degree two.
\end{theorem}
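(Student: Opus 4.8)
The plan is to prove the equivalent statement that no minimal monomial generator of $(I(G)^{s+1}:M)$ has degree larger than two, where I write $M=e_1\cdots e_s$. First I would record two easy facts. For every edge $e$ of $G$ the monomial $eM$ is a product of $s+1$ edges, hence $e\in(I(G)^{s+1}:M)$; thus $I(G)\subseteq(I(G)^{s+1}:M)$, and the colon does contain degree-two generators. Conversely, if a monomial $f$ lies in the colon then $fM$ is divisible by a product of $s+1$ edges, which has degree $2s+2$, so $\deg f\geq 2$. Hence every minimal generator has degree $\geq 2$, and it remains to show that whenever $fM\in I(G)^{s+1}$ with $\deg f\geq 3$ there is a monomial $d$ of degree two with $d\mid f$ and $dM\in I(G)^{s+1}$; such a $d$ is a proper divisor of $f$ lying in the colon, which forbids $f$ from being a minimal generator. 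I would run the argument by induction on $s$, the case $s=0$ being the tautology $(I(G):1)=I(G)$.

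Fix such an $f$ and write $fM=g_1\cdots g_{s+1}\,w$ with each $g_j\in E(G)$ and $w$ a monomial. Two cases are immediate. If some $g_j$ divides $f$, then $g_j$ is an edge contained in $f$ and $d=g_j$ works, since $g_jM$ is a product of $s+1$ edges. If instead some factor $g_j$ equals some $e_i$ as monomials, I cancel it: $\prod_{j'\neq j}g_{j'}$ divides $f\prod_{i'\neq i}e_{i'}$, so $f\in(I(G)^{s}:\prod_{i'\neq i}e_{i'})$, a colon by a product of $s-1$ edges. By the inductive hypothesis this colon is generated in degree two, so $f$ is divisible by one of its degree-two generators $d$, and then $dM=e_i\cdot\bigl(d\prod_{i'\neq i}e_{i'}\bigr)\in I(G)\cdot I(G)^{s}=I(G)^{s+1}$. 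In the remaining case I may assume every $g_j$ shares a variable with $M$ but coincides with none of the $e_i$.

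In this main case I would argue by a token assignment and an exchange. Assign each of the $2s+2$ variable-slots of $g_1\cdots g_{s+1}$ to an occurrence of the same variable in $f\prod e_i$, and choose the factorization $g_1\cdots g_{s+1}$ so that it uses as few occurrences coming from $f$ as possible. A single local exchange justifies this choice: if a factor $g_j=ut$ meets $M$ in a variable-token $t$ belonging to $e_i=tr$ while the partner token $r$ is unused, then replacing $g_j$ by the edge $e_i$ trades the $f$-occurrence $u$ for the $M$-occurrence $r$ and lowers the $f$-usage. Since at least $2s+2-2s=2$ slots must come from $f$, I obtain two occurrences $u,v$ of variables of $f$; if they lie in the same $g_j$ that factor divides $f$ and we are back in the first case, so they sit in distinct factors $g_{j_1}=ut_1$ and $g_{j_2}=vt_2$ with $t_1,t_2$ variable-tokens of $M$. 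The goal is now purely to reshuffle $uv\cdot M=uv\,e_1\cdots e_s$ into a product of $s+1$ edges: using $g_{j_1},g_{j_2}$ to absorb $u,v,t_1,t_2$, this succeeds precisely when the leftover tokens of $M$ can be re-paired into edges of $G$.

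The hard part will be exactly this reshuffling when $t_1$ and $t_2$ lie in two different edges $e_{i_1}\neq e_{i_2}$: then after using $g_{j_1}$ and $g_{j_2}$ the two partner tokens $r_1,r_2$ are left over, and $uvM$ is a product of $s+1$ edges only if $r_1r_2$ is itself an edge, or, failing that, only after rerouting the leftovers along an alternating chain through several of the $e_i$. Showing that among the variables of $f$ one can \emph{always} select a pair $u,v$ for which this rerouting closes up --- equivalently, that $u$ and $v$ are adjacent or even-connected with respect to $M$ --- is the combinatorial heart of the statement; I expect to establish it by an alternating-path exchange argument on the minimal-$f$-usage factorization, which is precisely the even-connection analysis carried out in~\cite{arindam2014}.
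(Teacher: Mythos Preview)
The paper does not prove this statement; Theorem~\ref{theorem:arindamgraph} is quoted verbatim from \cite{arindam2014} and stated without argument. So there is no ``paper's own proof'' to compare against.

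As for your outline: the easy reductions are correct. The containment $I(G)\subseteq(I(G)^{s+1}:M)$ and the degree lower bound are clear; the case $g_j\mid f$ is immediate; and the cancellation case $g_j=e_i$ legitimately drops you to an $(s-1)$-fold colon, where induction applies. The minimal-$f$-usage exchange is also a sound device for isolating two $f$-tokens $u,v$ sitting in distinct factors $g_{j_1},g_{j_2}$.

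However, you yourself flag that the remaining step --- showing that among the variables of $f$ one can always choose $u,v$ with $uvM\in I(G)^{s+1}$, equivalently that $u,v$ are adjacent or even-connected with respect to $M$ --- is ``the combinatorial heart of the statement'' and that you ``expect to establish it'' via the analysis of \cite{arindam2014}. That step \emph{is} the theorem: everything before it is bookkeeping. So what you have written is a correct framing and reduction, not a proof; the proposal ends exactly where the real work begins. Since the present paper also defers entirely to \cite{arindam2014} for this result, you are in the same position as the authors, only with more of the scaffolding made explicit.
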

To analyze the generators of $(I(G)^{s+1} :e_1\dots e_s)$, we recall the notion of $\emph{even-connectedness}$ with respect to $s$-fold products from~\cite{arindam2014}.
 
\begin{definition}
Two vertices $u$ and $v$ ($u$ may be same as $v$) are said to be {\bf even-connected} with respect to an $s$-fold product $e_1\dots e_s$ 
if there is a path $p_0 p_1\dots p_{2k+1}$, $k \geq 1$ in $G$ such that:\\
\begin{enumerate}
 \item $p_0=u, p_{2k+1}=v.$
 \item For all $0\leq l \leq k-1$, $p_{2l+1} p_{2l+2}=e_i$ for some $i$.
 \item For all $i$, $$|\{l\geq 0| p_{2l+1} p_{2l+2} =e_i \} | \leq | \{j |e_j=e_i \} | $$
\item For all $0 \leq r \leq 2k$, $p_r p_{r+1}$ is an edge in $G$.\ 
 \end{enumerate}
If these properties are satisfied then $p_0,\dots,p_{2k+1}$ is said to be an even-connection between $u$ and $v$ with respect to $e_1\dots e_s$.
\end{definition}
We make an observation which follows directly from the definition:
\begin{observation}
If $u,v$ are even connected with respect to $e_1\dots e_s$ then they are even connected with respect to $e_{i_1}....e_{i_t}$ for any $\{1,\dots,s\}\subset \{i_1,\dots,i_t\}$.
\end{observation}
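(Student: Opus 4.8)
The plan is to reuse, verbatim, the path that witnesses even-connectedness with respect to the smaller product. Suppose $u,v$ are even-connected with respect to $e_1\dots e_s$, and fix a path $p_0 p_1\dots p_{2k+1}$ satisfying conditions (1)--(4) of the definition. I would then argue that this very same path certifies that $u$ and $v$ are even-connected with respect to the larger product $e_{i_1}\dots e_{i_t}$, where $\{1,\dots,s\}\subseteq\{i_1,\dots,i_t\}$. In other words, nothing about the path needs to be modified; only the bookkeeping against the enlarged collection of edges changes.

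First I would observe that conditions (1) and (4) refer only to the endpoints $p_0=u$, $p_{2k+1}=v$ and to the requirement that consecutive vertices $p_r p_{r+1}$ form edges of $G$; neither involves the product at all, so both persist unchanged. For condition (2), each odd step $p_{2l+1}p_{2l+2}$ equals some $e_i$ with $1\le i\le s$, and since every index in $\{1,\dots,s\}$ also occurs among $i_1,\dots,i_t$, that edge is still one of the factors $e_{i_j}$ of the larger product; hence (2) continues to hold.

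The only point that genuinely requires checking is the multiplicity bound (3). For a fixed edge $e$, write $N(e)=|\{l\ge 0\mid p_{2l+1}p_{2l+2}=e\}|$ for the number of odd steps along the (fixed) path equal to $e$; this count depends only on the path, not on the product. Condition (3) for the small product reads $N(e)\le m_s(e)$, where $m_s(e)$ denotes the multiplicity of $e$ among $e_1,\dots,e_s$. Because $\{1,\dots,s\}\subseteq\{i_1,\dots,i_t\}$, the multiplicity $m_t(e)$ of $e$ among $e_{i_1},\dots,e_{i_t}$ satisfies $m_t(e)\ge m_s(e)\ge N(e)$, so (3) survives the passage to the larger product. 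Thus all four conditions are met and $u,v$ are even-connected with respect to $e_{i_1}\dots e_{i_t}$.

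I do not anticipate any real obstacle here, which matches the paper's assertion that the statement follows directly from the definition. The only care required is to read condition (3) as a comparison of multiplicities and to confirm that the inclusion of index sets yields $m_t(e)\ge m_s(e)$: enlarging the product can only increase the available multiplicity of each edge while leaving the fixed witnessing path, and therefore its usage counts $N(e)$, untouched.
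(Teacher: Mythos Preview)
Your proposal is correct and is exactly the direct verification from the definition that the paper has in mind; the paper itself gives no argument beyond remarking that the observation ``follows directly from the definition.'' Your only substantive step---checking that the multiplicity bound in condition~(3) is preserved because the inclusion $\{1,\dots,s\}\subseteq\{i_1,\dots,i_t\}$ forces $m_t(e)\ge m_s(e)$---is precisely what is needed, and the remaining conditions are, as you note, independent of the product.
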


By using the concept of even connection the second author gave a description of $(I(G)^{s+1} : e_1\dots e_s)$ for each $s$-fold product. 

\begin{theorem}[\cite{arindam2014}, Theorem $6.7$]\label{theorem:new}
Every generator $uv$ ($u$ may be equal to $v$) of  $(I(G)^{s+1} : e_1\dots e_s)$ is either an edge of $G$  or even-connected with respect to $e_1\dots e_s$, for $s \geq 1$.
\end{theorem}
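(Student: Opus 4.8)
The plan is to translate the membership condition into a purely combinatorial statement and then construct the required even connection by induction on $s$. By Theorem~\ref{theorem:arindamgraph} the ideal $(I(G)^{s+1}:e_1\cdots e_s)$ is generated in degree two, so it suffices to analyze an arbitrary degree-two monomial $uv$ (allowing $u=v$) lying in it. Such a monomial lies in the colon exactly when $uv\cdot e_1\cdots e_s\in I(G)^{s+1}$. Now every minimal generator of $I(G)^{s+1}$ is a product of $s+1$ edges and hence has degree $2(s+1)$, while $uv\cdot e_1\cdots e_s$ also has degree $2(s+1)$; a generator can divide a monomial of equal degree only if it equals it. Thus membership is equivalent to an identity of monomials $uv\cdot e_1\cdots e_s=f_1\cdots f_{s+1}$ with $f_1,\dots,f_{s+1}$ edges of $G$. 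The task reduces to: starting from this identity, show that $uv$ is an edge of $G$, or else that $u$ and $v$ are even-connected with respect to $e_1\cdots e_s$.

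I would argue by induction on $s$. The base case $s=0$ is the trivial observation that $I(G)$ is generated by its edges (an even connection requires at least one $e_i$, so that notion is vacuous here). For the inductive step, assume $uv$ is not an edge of $G$. Since $u$ divides $f_1\cdots f_{s+1}$, it is an endpoint of some edge $f_j=uw$, which supplies the first step $p_0p_1=uw$ of the path. The crucial point is to locate $w$. Cancelling $uw$ from both sides gives $\frac{v\,e_1\cdots e_s}{w}=f_1\cdots\widehat{f_j}\cdots f_{s+1}$, a genuine monomial, so $w\mid v\,e_1\cdots e_s$. If $w$ occurred in none of the $e_i$, then $w\mid v$; when $u\neq v$ this forces $w=v$ and hence $f_j=uv$, contradicting our assumption, while when $u=v$ it forces $w=u$ and hence the impossible \emph{loop} $f_j=u^2$ in the simple graph $G$. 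Therefore $w$ is an endpoint of some edge $e_i=ww'$. This reduction step, which rules out the degenerate possibilities and always peels off one genuine edge $e_i$, is the crux of the argument.

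With $e_i=ww'$ in hand I would set up the smaller instance: substituting $e_i=ww'$ and $f_j=uw$ into the identity yields $v\,w'\cdot(e_1\cdots\widehat{e_i}\cdots e_s)=f_1\cdots\widehat{f_j}\cdots f_{s+1}$, exhibiting $vw'$ as a degree-two element of the colon ideal $(I(G)^{s}:e_1\cdots\widehat{e_i}\cdots e_s)$, taken with respect to an $(s-1)$-fold product. By the induction hypothesis $vw'$ is either an edge of $G$ or $w'$ is even-connected to $v$ with respect to $e_1\cdots\widehat{e_i}\cdots e_s$; in either case I prepend the two new steps $u\to w\to w'$, with the edge $uw=f_j$ in even position and the edge $ww'=e_i$ in odd position, to obtain a path from $u$ to $v$ with the required alternation. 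Checking that this concatenation is a legitimate even connection with respect to the full product $e_1\cdots e_s$ is routine for conditions (1), (2) and (4); the delicate point, and the main obstacle, is the multiplicity bound (3). It is exactly preserved because passing from $e_1\cdots e_s$ to $e_1\cdots\widehat{e_i}\cdots e_s$ lowers the multiplicity of the edge $e_i$ by precisely one, which matches the single extra use of $e_i$ introduced by the new odd step. The difficulty of the proof therefore lies not in any estimate but in this bookkeeping: correctly isolating $w$ inside some $e_i$, handling the degenerate cases $u=v$ and $w\in\{u,v\}$, and ensuring no edge is ever used beyond its available multiplicity when the short path is glued onto the inductively produced one.
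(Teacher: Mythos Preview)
The paper does not supply its own proof of this statement: Theorem~\ref{theorem:new} is quoted verbatim from \cite{arindam2014} (Theorem~6.7 there) and used as a black box, so there is no in-paper argument to compare against.

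Your argument is correct and self-contained. The opening reduction---that $uv\in(I(G)^{s+1}:e_1\cdots e_s)$ forces an \emph{equality} of monomials $uv\cdot e_1\cdots e_s=f_1\cdots f_{s+1}$ because both sides have degree $2(s+1)$---is the right starting point, and the induction on $s$ proceeds cleanly. The case analysis ruling out $w\notin\{e_1,\dots,e_s\}$ is sound (including the $u=v$ loop case), and your treatment of condition~(3) is accurate: deleting one copy of $e_i$ from the product when forming the smaller instance and then reinserting exactly one use of $e_i$ as the new odd step $p_1p_2=ww'$ keeps the multiplicity count honest, even when other $e_j$ coincide with $e_i$ as edges. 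One small point worth making explicit: when you invoke the induction hypothesis on $vw'$, you are using that any degree-two monomial in a monomial ideal generated in degree two is already a minimal generator; this is immediate but should be stated, since the theorem is phrased for generators rather than arbitrary degree-two elements. The formal base case $s=0$ lies outside the stated range $s\ge 1$, but it functions perfectly well as the anchor for the induction.
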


%{\bf Added by Ali on July $7th$}

As bipartite graphs have no odd cycles, the following result is from the definition of even-connectedness. 
\begin{proposition}\label{col:mycol}
 Let $G$ be a bipartite graph and $s\geq 1$ be an integer. Then for every $s$-fold product $e_1\dots e_s$, $(I(G)^{s+1}:e_1\dots e_s)$ is a quadratic squarefree monomial ideal.
 Moreover the graph $G'$ associated to $(I(G)^{s+1}:e_1\dots e_s)$ is bipartite on the same vertex set and same bipartition as $G$. 
\end{proposition}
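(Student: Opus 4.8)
The plan is to read off the statement directly from the two structural results already quoted, namely Theorem~\ref{theorem:arindamgraph} and Theorem~\ref{theorem:new}, and then to feed every generator through a single parity argument using K\"onig's theorem (Theorem~\ref{theorem:konig}). The quadratic claim is immediate: Theorem~\ref{theorem:arindamgraph} says that $(I(G)^{s+1}:e_1\dots e_s)$ is generated by monomials of degree two, so there is nothing to prove there. The content is to show that none of these degree-two generators is a square $u^2$ (squarefreeness) and that each genuine generator $uv$ joins a vertex of $X$ to a vertex of $Y$ (bipartiteness with the same bipartition).

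First I would settle squarefreeness. Suppose, for contradiction, that some generator has the form $u^2$. Since $G$ is a simple graph it has no loops, so $u^2$ is not an edge of $G$; hence by Theorem~\ref{theorem:new} the pair $(u,u)$ must be even-connected with respect to $e_1\dots e_s$. By definition this yields a path $p_0 p_1\dots p_{2k+1}$ with $k\geq 1$, $p_0=p_{2k+1}=u$, and every $p_r p_{r+1}$ an edge of $G$. This is a closed walk in $G$ of odd length $2k+1\geq 3$. The key elementary fact to invoke is that a closed walk of odd length always contains an odd cycle; this odd cycle contradicts K\"onig's theorem (Theorem~\ref{theorem:konig}), since $G$ is bipartite. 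Therefore no generator is a square, and the ideal is squarefree.

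Next I would handle the bipartition. Fix the bipartition $V(G)=X\sqcup Y$. Let $uv$ (with $u\neq v$, by the previous paragraph) be any generator, i.e.\ any edge of the associated graph $G'$. If $uv$ is an edge of $G$, then $u$ and $v$ lie in different parts because $G$ is bipartite. If instead $uv$ is even-connected, take an even-connection $p_0\dots p_{2k+1}$ with $p_0=u$ and $p_{2k+1}=v$; each consecutive pair $p_r p_{r+1}$ is an edge of $G$, so traversing the walk switches sides between $X$ and $Y$ at every step, and after the odd number $2k+1$ of steps the endpoint $v$ lies in the part opposite to that of $u$. In both cases $u$ and $v$ belong to different parts of $X\sqcup Y$, so every edge of $G'$ respects the bipartition. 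Hence $G'$ is bipartite on the same vertex set and with the same bipartition as $G$, as claimed.

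The whole argument is short, and the only point requiring care — the main obstacle — is that the "path" in the definition of even-connection is really a walk whose vertices may repeat, so in the squarefree step one cannot simply say "closed path of odd length = odd cycle." I would make explicit the standard reduction that any closed walk of odd length contains an odd cycle, which is exactly what K\"onig's theorem forbids; everything else is bookkeeping with the parity of the walk length $2k+1$.
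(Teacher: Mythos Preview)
Your proof is correct and follows essentially the same route as the paper: quadratic from Theorem~\ref{theorem:arindamgraph}, squarefree because an even-connection from $u$ to itself would be an odd closed walk (hence force an odd cycle, contradicting Theorem~\ref{theorem:konig}), and bipartite by the same parity chase along the even-connection. Your explicit remark that the ``path'' in the definition is really a walk and that one needs the standard fact \emph{odd closed walk $\Rightarrow$ odd cycle} is a point the paper leaves implicit, but otherwise the arguments coincide.
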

\begin{proof}
 Note that from Theorem~\ref{theorem:arindamgraph} we know $(I(G)^{s+1}:e_1\dots e_s)$ is a quadratic ideal. On the other hand since $G$ is a bipartite graph, from 
 Theorem~\ref{theorem:konig} we know $G$ contains no odd cycle. Therefore for every vertex $v\in V(G)$ we can say $v$ is not even connected to itself with respect to $e_1\dots e_s$. 
So $(I(G)^{s+1}:e_1\dots e_s)$ is squarefree. Then a graph can be associated to it, namely $G'$. Now we show $G'$ is also bipartite on $V(G)$ with the same bipartition. 

From Theorem~\ref{theorem:new} we 
have $G\subseteq G'$ and $V(G)=V(G')$ but $E(G)\subseteq E(G')$. Let $V(G)=X\sqcup Y$ be the bipartition. We only need to show that $X$ and $Y$ are independent in $G'$. 

Suppose there is $e=uv\in G'$ such that $u,v\in X$. Since $G$ is bipartite $e\notin G$, using Theorem~\ref{theorem:new} we can say $u$ and $v$ are even connected with respect 
to $e_1\dots e_s$. By the definition of even-connectedness there is 
a path $p_0p_1\dots p_{2k+1}$ in $G$ such that $p_0=u$ and $p_{2k+1}=v$. Now note that since $u\in X$ and $p_0p_1\in G$ and 
$G$ is bipartite we can conclude $p_1\in Y$, since $p_1p_2\in G$ and $p_2\in X$ by repeating this process we can say $p_{2k}\in X$. 

But we know $p_{2k}p_{2k+1}\in G$ and 
$p_{2k},p_{2k+1}\in X$, which contradicts the fact that $G$ is bipartite. 
Then $X$ is independent in $G'$. By using the same method we can show that $Y$ is independent in $G'$.
 
 \end{proof}
The following corollary follows directly from  Theorem~\ref{theorem:froberg}, and Lemma $6.14$ and Lemma $6.15$ 
in~\cite{arindam2014}.
%Next we state a lemma with out proof from \cite{arindam2014} that will be useful to get our main results.\\
 \begin{corollary}\label{lemma:arindamlem}
Let $G$ be any graph and $e_1,\dots, e_s$ be some edges of $G$ which are not necessarily distinct.  If the minimal free resolution of $I(G)$ is linear, then
$$(I(G)^{s+1}:e_1\dots e_s)$$ 
also has a linear minimal free resolution. 
\end{corollary}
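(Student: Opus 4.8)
The plan is to translate the statement entirely into the language of chordal complements via Fr\"oberg's criterion and then argue combinatorially. By Theorem~\ref{theorem:froberg}, the hypothesis that $I(G)$ has a linear resolution is equivalent to $G^c$ being chordal. Write $J=(I(G)^{s+1}:e_1\dots e_s)$. By Theorem~\ref{theorem:arindamgraph} the ideal $J$ is generated in degree two, and by Theorem~\ref{theorem:new} every minimal generator $uv$ is either an edge of $G$ or an even-connected pair with respect to $e_1\dots e_s$. So it remains to prove that this quadratic monomial ideal has a linear minimal free resolution.

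First I would remove the non-squarefree generators: a generator $v^2$ occurs precisely when $v$ is even-connected to itself, and I would handle these by passing to the polarization $\widetilde{J}$, which is squarefree and hence the edge ideal $I(G')$ of a graph $G'$ on a possibly enlarged vertex set. Since polarization preserves the (non)linearity of the minimal free resolution, $J$ has a linear resolution if and only if $I(G')$ does; by Theorem~\ref{theorem:froberg} once more, this is equivalent to $(G')^c$ being chordal. In the bipartite situation that concerns us, Proposition~\ref{col:mycol} already guarantees $J$ is squarefree, so no polarization is needed and $G'$ is simply the even-connection graph on $V(G)$.

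The core of the argument---and the step I expect to be the main obstacle---is to deduce chordality of $(G')^c$ from chordality of $G^c$. Since $E(G)\subseteq E(G')$, the complement $(G')^c$ is obtained from $G^c$ by \emph{deleting} exactly the edges that correspond to even-connected pairs, and deleting edges from a chordal graph can create long induced cycles in general; the content is that the even-connection structure prevents this. Here I would invoke the neighborhood-domination properties of even-connection established in Lemma $6.14$ and Lemma $6.15$ of~\cite{arindam2014}: if $u$ and $v$ are even-connected through a path $p_0p_1\cdots p_{2k+1}$, then in $G'$ the vertex $v$ dominates the relevant $G$-neighbors of the interior path vertices, so that $u$ and $v$ behave like adjacency-twins. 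Arguing by contradiction, I would assume $(G')^c$ contained an induced cycle of length $\geq 4$ and then replace each vertex that was introduced through even-connection by a suitable endpoint of its connecting path; using the domination property to transport the required non-edges, this should produce an induced cycle of length $\geq 4$ in $G^c$, contradicting its chordality.

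Once $(G')^c$ is shown to be chordal, Theorem~\ref{theorem:froberg} gives that $I(G')$, and therefore $J=(I(G)^{s+1}:e_1\dots e_s)$, has a linear minimal free resolution, which is the assertion of the corollary. The delicate bookkeeping lies entirely in the replacement step of the previous paragraph---one must verify that after substituting path endpoints for even-connected vertices the resulting closed walk in $G^c$ has neither accidental chords nor repeated vertices---and this is precisely the work carried out by the two cited lemmas.
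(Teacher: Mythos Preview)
Your proposal is correct and follows essentially the same approach the paper intends: the paper does not give an independent proof of this corollary but simply records that it follows from Fr\"oberg's theorem together with Lemma~6.14 and Lemma~6.15 of~\cite{arindam2014}, and your outline is precisely the natural unpacking of that citation (translate to chordality of complements, describe the generators via even-connection, polarize if necessary, and use the domination lemmas to rule out long induced cycles in $(G')^c$). Nothing further is needed.
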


To prove the main result of this section we need the following lemma.
\begin{lemma}\label{lemma:Arindamlemma}
 Let $G$ be a bipartite graph and $I=I(G)$ be its edge ideal. Suppose $e_1\dots e_s$ is an $s$-fold product of edges in $G$ for a positive integer $s$. Then we have 
\begin{eqnarray*}
(I^{s+1}:e_1\dots e_s)=\left(\left(I^2:e_i\right)^s:\prod_{j\neq i} e_j\right)&\text{for each $i\in\{1,2,\dots,s\}$}. 
\end{eqnarray*}

\end{lemma}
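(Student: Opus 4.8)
The plan is to fix an arbitrary index $i$, write $K=(I^2:e_i)$ and $q=\prod_{j\ne i}e_j$, so that $e_1\cdots e_s=e_i\,q$ and the claimed identity becomes $(I^{s+1}:e_i\,q)=(K^s:q)$. The key observation is that all three colon ideals in sight are of the form ``an $(t+1)$st power coloned by a product of $t$ edges'': the ideal $K=(I^2:e_i)$ is the case $t=1$, the left-hand side $(I^{s+1}:e_i\,q)$ is the case $t=s$ for $G$, and the right-hand side $(K^s:q)$ is the case $t=s-1$ for the graph $G'$ whose edge ideal is $K$. Hence Theorem~\ref{theorem:arindamgraph} shows each is generated in degree two, $G'$ is bipartite by Proposition~\ref{col:mycol}, and the even-connection dictionary of Theorem~\ref{theorem:new} (together with the converse description from~\cite{arindam2014}) identifies their quadratic generators. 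Concretely the edges of $G'$ are $E(G)$ together with the pairs even-connected in $G$ with respect to $e_i$; the generators of $(I^{s+1}:e_i\,q)$ are $E(G)$ together with the pairs even-connected in $G$ with respect to $e_i\,q$; and the generators of $(K^s:q)$ are $E(G')$ together with the pairs even-connected in $G'$ with respect to $q$. The whole Lemma thus reduces to the combinatorial identity of these two edge sets.

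The inclusion $(I^{s+1}:e_i\,q)\subseteq(K^s:q)$ is the painless direction, and I would prove it by absorbing occurrences of $e_i$ into augmented edges. Given an even-connection in $G$ with respect to $e_i\,q$, each glue step that equals $e_i$, taken together with its two neighbouring free steps, is exactly a length-three even-connection through $e_i$, i.e. an edge of $G'$; contracting every such length-three segment to a single $G'$-edge turns the path into an even-connection in $G'$ that uses only the $e_j$ with $j\ne i$ as glue, with their multiplicities unchanged. This realizes the original pair as a generator of $(K^s:q)$. The remaining (edge) case is immediate, since an edge of $G$ lies in $K$ and $q\in I^{s-1}\subseteq K^{s-1}$.

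The reverse inclusion $(K^s:q)\subseteq(I^{s+1}:e_i\,q)$ is the heart of the matter, and here I would run the contraction in reverse. Starting from an even-connection in $G'$ with respect to $q$, I would expand every free step that is a genuinely augmented edge of $G'$ (even-connected in $G$ via $e_i$ but not an edge of $G$) back into its length-three path through the endpoints of $e_i$, splicing these into the walk. Each such expansion reinserts one copy of $e_i$ as a glue step, and the goal is to arrive at a valid even-connection in $G$ with respect to $e_i\,q$ between the same two vertices.

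The main obstacle is precisely the multiplicity condition (3) in the definition of even-connectedness: in the product $e_i\,q$ the edge $e_i$ occurs only once, so a legitimate even-connection with respect to $e_i\,q$ may use $e_i$ as a glue step at most once, whereas splicing $N$ augmented edges introduces $N$ copies of $e_i$. The crucial step, which I expect to cost the most effort, is therefore to show that the $G'$-even-connection can always be chosen to traverse at most one augmented edge, equivalently to short-circuit a walk that passes through the endpoints of $e_i$ several times so that $e_i$ is used only once while the endpoints $u,v$ are preserved. This is exactly where bipartiteness of $G$ must be used: it forbids a vertex from being even-connected to itself and pins down the parity of the spliced walk, which is what allows the redundant detours through $e_i$ to be excised. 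Once the reduction to a single augmented edge is in place, the single available copy of $e_i$ repairs that one even-connected factor into two honest edges of $G$, and the resulting product of $s+1$ edges of $G$ gives the desired membership.
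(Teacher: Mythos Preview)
Your proposal is correct and follows essentially the same route as the paper's proof: both directions are argued via the even-connection description of the quadratic generators (Theorems~\ref{theorem:arindamgraph} and~\ref{theorem:new}), contracting occurrences of $e_i$ to pass from $G$ to $G'$ in the forward inclusion and expanding augmented $G'$-edges back through $e_i$ in the reverse inclusion, with bipartiteness used to control orientation/parity so that redundant passes through $e_i$ can be short-circuited down to a single one. The only cosmetic difference is that in the forward direction the paper makes one large jump---replacing the entire stretch between the first and last $e_i$-glue positions by a single $G'$-edge (which bipartiteness guarantees, since every appearance of $e_i=yx$ is forced to occur in the same orientation)---whereas you contract each $e_i$-glue segment individually; both devices accomplish the same reduction.
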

\begin{proof}
Without loss of generality we can suppose $i=1$.  

Suppose $V(G)=X\sqcup Y$ is a bipartition of $V(G)$. and assume $uv\in (I^{s+1}:e_1\dots e_s)$. 
From Theorem~\ref{theorem:new}, we have $uv\in I$ or $u$ and $v$ are even-connected with respect to $e_1\dots e_s$.
Without loss of generality we can suppose $u\in X$.
If $uv\in I$ then the statement clearly follows. Let us assume $uv\notin I$ and $u$ and $v$ are even-connected with respect to $e_1\dots e_s$. 

From the definition of even-connection there is a path  
 $P:u=x_0y_1x_1\dots x_ky_{k+1}=v$ in $G$ such that  
 \begin{eqnarray*}
\text{for each $i\in\{1,\dots,k\}$},& y_{i}x_{i}=e_j&\text{for some $j\in\{1,2,\dots,s\}$}. 
 \end{eqnarray*}
We put $G'=G((I^2:e_1))$. Since $G\subset G'$ if there is no $i\in\{1,\dots,k\}$ such that $y_{i}x_{i}=e_1$, then the path $P$ is an even-connection with respect to $e_2\dots e_s$
in $G'$. So from Theorem~\ref{theorem:new} the result follows. Therefore we assume there are  
$\alpha_1,\dots,\alpha_{\ell}\in\{1,\dots,k\}$ such that 
\begin{eqnarray*}
 \alpha_1<\alpha_2<\dots <\alpha_{\ell}&\text{and}&\text{$y_{\alpha_t}x_{\alpha_t}=e_1$ for every $t$}. 
\end{eqnarray*}
For each $t$ since $y_{\alpha_t}x_{\alpha_t}=e_1$, $x_{\alpha_t-1}$ and $y_{\alpha_t+1}$ are even-connected with respect to $e_1$ and therefore 
$x_{\alpha_t-1}y_{\alpha_t+1}\in G'$. In particular we have $x_{\alpha_1-1}$ and $y_{\alpha_{\ell}+1}$ are even-connected with respect to $e_1$. 
Then we have the following even-connection with respect to $e_2\dots e_s$ in $G'$
$$P':u=x_0y_1x_1\dots x_{\alpha_1-1}y_{\alpha_{\ell}+1}x_{\alpha_{\ell}+1}\dots y_{k+1}=v$$
and so $uv \in ((I^2:e_1)^s:e_2\dots e_s)$. The conditions (2), (3) and (4) of even connectedness follow as $P$ is an even connection in $G$.

To show the converse suppose  $uv \in ((I^2:e_1)^s:e_2\dots e_s)$. Then from Theorem~\ref{theorem:new} either $uv\in (I^2:e_1)$ or $u,v$ are even-connected with respect to 
$e_2\dots e_s$ in $G'$. If  $uv\in (I^2:e_1)$, the statement is evident (since then clearly $uv\in (I^{s+1}:e_1\dots e_s)$). So we assume 
$uv \notin (I^2:e_1)$. 
From the definition of even-connectedness there is a path $P:u=x_0y_1x_1\dots y_kx_ky_{k+1}=v$ in $G'$ such that  
 \begin{eqnarray*}
\text{for each $i\in\{1,\dots,k\}$},& y_{i}x_{i}=e_j&\text{for some $j\in\{2,\dots,s\}$}. 
 \end{eqnarray*}
If for each $i$, $x_iy_{i+1}\in G$, then $P$ is an even-connection in $G$ and from Theorem~\ref{theorem:new} the claim is evident. So suppose there exists $i$ such that 
$x_{i}y_{i+1}\in G'\backslash G$. From Theorem~\ref{theorem:new} $x_{i}$ and $y_{i+1}$ are even connected with respect to $e_1$. So by the definition there is a path 
$x_{i}e_1y_{i+1}$ in $G$. Let $e_1=yx$. Therefore if we replace $x_{i}y_{i+1}$ by $x_{i}e_1y_{i+1}$ in $P$ we have the following path in $G$ 
$$P':u=x_0y_1x_1\dots x_{i}yxy_{i+1}\dots y_{k+1}=v.$$
If we have only one copy of $e_1$ in $P'$, then clearly $P'$ is 
an even connection with respect to $e_1\dots e_s$ in $G$ and the theorem will follow. Otherwise assume there exists $i$ and $j$ 
such that $i<j$ and $|j-i|$ maximum such that 
\begin{eqnarray*}
x_{i}yxy_{i+1},x_{j}yxy_{j+1}\in P'.
\end{eqnarray*}
%Note: since $G$ is bipartite, if $p_{2i}$ is an $X$ vertex so is $p_{2j}$ and if $p_{2i}$ is a $Y$ vertex so is $p_{2j}$. 
Then $P'$ can be reduced to the following path in $G$
$$P'':u=x_0y_1x_1\dots x_{i}yxy_{j+1}\dots y_{k+1}=v.$$
We observe that this is an even-connection with respect to 
$e_1\dots e_s$. The conditions (1), (2) and (4) in the definition are satisfied as $P$ is an even connection in $G'$ and condition (3) 
follows from the fact that $P''$ has only one copy of $e_1$ by construction. This proves the converse.  

\end{proof}

\begin{theorem}\label{theorem:main}
 Let $G$ be a bipartite connected graph and $s\geq 1$ be an integer. If $\reg(I(G))=3$, then $\reg((I(G)^{s+1}:e_1\dots e_{s}))\leq 3$ for every $s$-fold product $e_1\dots e_s$.  
\end{theorem}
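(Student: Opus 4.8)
The plan is to prove the bound by induction on $s$, using Lemma~\ref{lemma:Arindamlemma} to peel off one edge at a time and reduce everything to the case $s=1$. Writing $e_1=xy$ and $G_1$ for the graph associated to $(I^2:e_1)$, Lemma~\ref{lemma:Arindamlemma} gives $(I^{s+1}:e_1\cdots e_s)=(I(G_1)^{s}:e_2\cdots e_s)$. By Proposition~\ref{col:mycol} the graph $G_1$ is bipartite, and since $G\subseteq G_1$ on the same vertex set, $G_1$ is again connected; thus the induction stays inside the class of connected bipartite graphs. Granting the base case $s=1$ (isolated below), one has $\reg(I(G_1))\le 3$, and there are two possibilities. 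If $\reg(I(G_1))=2$, then $I(G_1)$ has a linear resolution, so Corollary~\ref{lemma:arindamlem} forces $(I(G_1)^{s}:e_2\cdots e_s)$ to have a linear resolution and hence regularity $2\le 3$. If $\reg(I(G_1))=3$, I apply the inductive hypothesis (the theorem for exponent $s-1$) to the connected bipartite graph $G_1$ to get $\reg((I(G_1)^{s}:e_2\cdots e_s))\le 3$. Either way the bound holds, so the whole theorem rests on the case $s=1$.

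For $s=1$ the crux is to show $\reg((I^2:xy))\le 3$, i.e. $\reg(I(G_1))\le 3$. First I would unwind even-connectedness with respect to the single edge $e_1=xy$: the multiplicity condition (3) forces the connecting path to have length exactly three, so the edges added to $G$ to form $G_1$ are precisely the pairs $ab$ with $a\in A:=N_G(y)\cap X$ and $b\in B:=N_G(x)\cap Y$. Equivalently, $G_1^{bc}$ is obtained from $G^{bc}$ by deleting every edge between $A$ and $B$. By Theorem~\ref{theorem:froberg} and Proposition~\ref{prop:myprop} it suffices to prove that $G_1^{bc}$ has no induced cycle of length $\ge 6$: if $G_1^{c}$ is chordal then $\reg(I(G_1))=2$, and otherwise Proposition~\ref{prop:myprop} gives $\reg(I(G_1))=3$ exactly because $G_1^{bc}$ carries no induced cycle of length $\ge 6$.

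So I would argue by contradiction, assuming $G_1^{bc}$ contains an induced cycle $C$ of length $\ge 6$, and try to manufacture an induced cycle of length $\ge 6$ in $G^{bc}$ — which is forbidden by Proposition~\ref{prop:myprop} applied to $G$, since $\reg(I(G))=3$. The structural facts to exploit are that $G_1^{bc}$ has no edge between $A$ and $B$ (so along $C$ every $A$-vertex is flanked by vertices of $Y\setminus B$ and every $B$-vertex by vertices of $X\setminus A$), that $C$ is already a (possibly non-induced) cycle of $G^{bc}$ because $G_1^{bc}\subseteq G^{bc}$, and that the only chords $C$ can acquire in $G^{bc}$ are $A$--$B$ edges among its own vertices. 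When $C$ meets at most one of $A$ and $B$ there are no such chords, so $C$ is already induced in $G^{bc}$ and we are done.

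The hard part is the remaining case, where $C$ meets both $A$ and $B$ and genuinely has chords in $G^{bc}$. Here I would use that in $G^{bc}$ the vertex $x$ is adjacent to exactly $Y\setminus B$ and $y$ to exactly $X\setminus A$, so $y$ can substitute for a $B$-vertex and $x$ for an $A$-vertex along a chord-free arc whose two endpoints lie in $X\setminus A$ (respectively $Y\setminus B$): cutting $C$ at a suitable chord and closing the surviving arc through $y$ or $x$ yields a shorter cycle that is still induced in $G^{bc}$ and still of length $\ge 6$. Making this rerouting work uniformly — choosing the chord and the arc so that the new cycle remains induced and does not drop below length six — is the main obstacle, and is exactly where the hypothesis that $G^{bc}$ has no induced cycle of length $\ge 6$ gets fed back in. Once this combinatorial surgery is carried out the contradiction is complete, establishing the base case and hence the theorem.
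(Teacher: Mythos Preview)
Your inductive scaffolding is exactly the paper's: induct on $s$, use Lemma~\ref{lemma:Arindamlemma} to write $(I^{s+1}:e_1\cdots e_s)=(I(G_1)^s:e_2\cdots e_s)$ with $G_1=G((I^2:e_1))$, invoke Proposition~\ref{col:mycol} for bipartiteness (plus $G\subseteq G_1$ for connectedness), then split on whether $\reg(I(G_1))$ is $2$ (Corollary~\ref{lemma:arindamlem}) or $3$ (inductive hypothesis). Your reduction of the base case to showing that $G_1^{bc}$ has no induced cycle of length $\ge 6$, via Proposition~\ref{prop:myprop}, is likewise what the paper does, and your description of $E(G_1)\setminus E(G)$ as the complete bipartite set $A\times B$ with $A=N_G(y)\cap X$ and $B=N_G(x)\cap Y$ is a clean and correct reformulation that the paper uses only implicitly.

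Where your proposal remains genuinely incomplete is exactly the place you flag: the surgery in the case where $C$ meets both $A$ and $B$. Your instinct that $x$ and $y$ must be used to reroute is correct and is precisely what drives the paper's argument, but the simple substitution you describe does not immediately yield an induced cycle, because in $G^{bc}$ the vertex $x$ is adjacent to \emph{every} element of $Y\setminus B$ (and $y$ to every element of $X\setminus A$), so inserting $x$ in place of a single $A$-vertex typically creates many new chords. The paper's execution is correspondingly more delicate: it first chords $C$ down inside $G^{bc}$ to locate a specific $4$-cycle $x_1y_1x_2y_2$ on vertices of $C$ with $x_1y_2\in G_1\setminus G$ (forcing $x_1\in A$, $y_2\in B$), and then proves a chain of auxiliary claims --- including a secondary induction on an index $\ell$ running from $3$ to $n-1$ --- establishing that either the $6$-cycle $x_1y_1x_2\,y\,x_3y_3$ (when $n=3$) or the $2(n-1)$-cycle $x_1y_2x_3y_3\cdots x_ny_n$ (when $n>3$) is induced in $G^{bc}$. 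The intermediate contradictions all arise from building candidate induced cycles through $x$ or $y$ and feeding $\reg(I(G))=3$ back in, just as you anticipate. So your plan is right, but the combinatorics you defer is the real content of the theorem, and it is not a one-step rerouting.
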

\begin{proof}
 Let $e_1\dots e_s$ be an $s$-fold product. 
 Proof by induction on $s$. Let $s=1$.
From Proposition~\ref{col:mycol} $(I(G)^{2}:e)$ 
is a quadratic squarefree monomial ideal and its associated 
graph $G'$ is bipartite with the same vertex set and bipartition as $G$. Also note that if $G'^{c}$ contains no induced cycle of length $\geq 4$, 
then from Theorem~\ref{theorem:froberg} and Proposition 2.17 we can say 
$(I(G)^{2}:e)$ has a 
linear resolution and thus 
$$\reg((I(G)^{2}:e))=2.$$
Then we may assume $G'^{c}$ has an induced cycle of length $\geq 4$. From Proposition~\ref{prop:myprop} we only need to show that there is no cycle of length $\geq 6$ in $G'^{bc}$.

Let $V(G)=X\sqcup Y$ be a bipartition of $G$ and assume there is an induced cycle $C_{2n}$ in $G'^{bc}$ ($n\geq 3$) on the following vertex set 
$$V(C)=\{x_1,\dots,x_n\}\cup\{y_1,\dots,y_n\}.$$ 
\begin{center}
 \includegraphics[width=2 in]{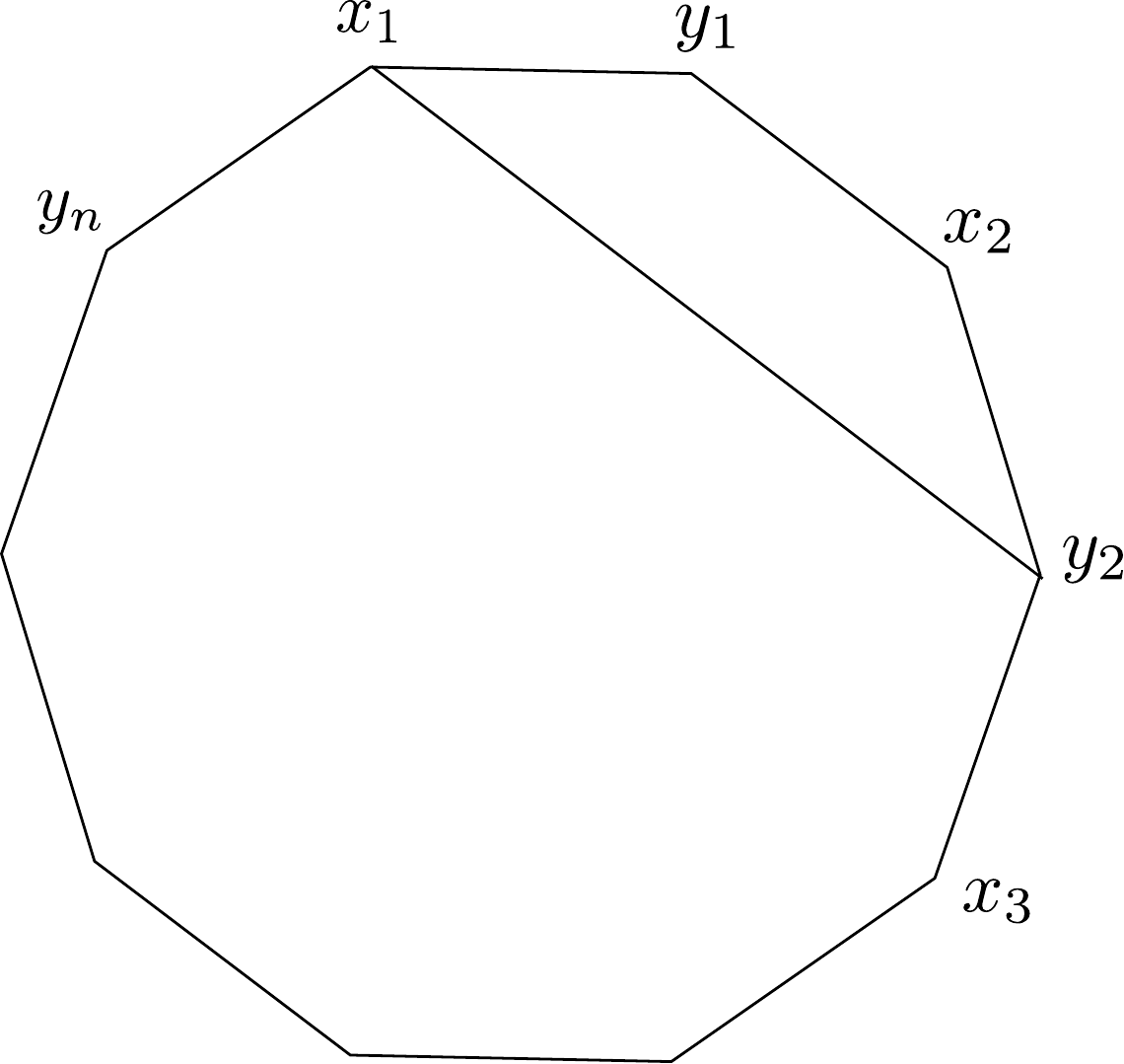}
 \end{center}
Let $C_{2n}:x_1y_1,x_2y_2,\dots,x_ny_n$. Since $G\subseteq G'$,
then $C_{2n}$ is also a cycle in $G^{bc}$. Since $\reg(I(G))=3$, and the length of $C_{2n}$ $\geq$ $6$ from Proposition~\ref{prop:myprop} $C_{2n}$ has to contain a chord.
We can assume this chord divide $C_{2n}$ to two cycles of smaller length. If one of these cycles is $C_4$ we stop, otherwise since its length is $\geq$ $6$ it must have a chord 
(Proposition~\ref{prop:myprop}). 
We get that chord. Again this chord divides $C_{2n}$ into two cycles of smaller length. If one of these cycles is $C_4$ we stop, otherwise 
we keep finding chord to end up with $C_4$ in $G^{bc}$. Without loss of generality we can assume $C_4=x_1,y_1,x_2,y_2$.

%$x_iy_j$
%in $G^{bc}$. Since $C_{2n}$ is an induced cycle in $G'^{bc}$ we have  $x_iy_j\in G'\backslash G$. We can assume this chord divides $C_{2n}$ 
%into two cycles called 
%$C_1$ and $C_2$ of which one $C_1$ is minimal  (or in other words has no chords).

%Since $G'$ is bipartite, the length of $C_1$ must be even. Also since $\reg(G)=3$ by using Proposition~\ref{prop:myprop}
%we can conclude the length of $C_1$ is $4$.  

First note that by applying Theorem~\ref{theorem:new} because $x_1y_2\in G'\backslash G$,  we can conclude $x_1y_2$ is even connected with respect to $e=xy$. 
Then by the definition of even-connectedness there is a path $x_1yxy_2$ in $G$. 
Using proof by contradiction we will show that the cycle $C_{2n}$ contains an induced cycle of length $\geq 6$ in $G^{bc}$.

We first prove the following useful statements
\renewcommand{\theenumi}{\Roman{enumi}}
\begin{enumerate}
 \item $x_2y,x_3y\notin G$. 
 
If $x_2y\in G$ (or $x_3y\in G$), 
then since $xy_2\in G$ we have the even-connection $x_2yxy_2$ (or $x_3yxy_2$) with respect to $e$ in $G$. 
So from Theorem~\ref{theorem:new} 
$x_2y_2\in G'$ (or $x_3y_2\in G'$); a contradiction.

\item $x_3y_1,x_2y_3\in G$. 

Suppose $x_3y_1\in G^{bc}$ (or $x_2y_3\in G^{bc}$). Then 
from Theorem~\ref{theorem:new} $x_3$ and $y_1$ (or $x_2$ and $y_3$) are even connected with respect to $e$ and then from the definition 
$xy_1\in G$ (or $x_2y\in G$). 
Since $xy_2\in G$ and $x_1y\in G$ we have the even-connections $xy_1x_1y$ (or $xy_2x_2y$) in $G$. Then by applying Theorem~\ref{theorem:new} 
we can conclude $x_1y_1$ (or $x_2y_2\in G'$); a contradiction. 
\end{enumerate}

We now settle our claim for $n=3$. 

Note that $y\neq y_1,y_3$ 
(if $y=y_1$ then $x_1y$ is an edge in $G$ as $x_1y_1$ is, and if $y=y_3$ then $x_2 y$ is an edge in $G$ as $x_2y_3$ 
is an edge in $G$ by assumption, that forces $x_2y_2$ to be an edge in $G'$; both lead to a contradiction). 
Therefore, we can consider the $6$-cycle $x_1y_1x_2yx_3y_3$ in $G^{bc}$. From $(I),(II)$ we know this cycle has no chords in $G^{bc}$, 
thus it is an induced cycle in $G^{bc}$ of length $6$, which contradicts Proposition~\ref{prop:myprop}  and the fact that $\reg(G)=3$.

We now assume $n>3$.

We show the following statements

\begin{itemize}
 \item For each $i\geq 2$, if $xy_i\in G$ then we have 
 \begin{eqnarray}
  x_{i+1}y_j\in G &\text{for each $j\notin\{i,i+1\}$ and}&x_{i}y_j\in G\hspace{.05 in}\text{for each $j\notin\{i,i-1\}$}.\label{equ:1}
 \end{eqnarray}
\begin{proof}
Assume for some $j$ we have $x_{i+1}y_j\in G^{bc}$ (or $x_{i}y_j\in G^{bc}$). 
Then we have an even-connection $x_{i+1}yxy_j$ (or $x_iyxy_j$) that is $x_{i+1}$ (or $x_i$) is connected to $y$ in $G$.
Also since $xy_i\in G$, these  even-connections can be converted to the even connection 
$x_{i+1}yxy_i$ (or $x_iyxy_i$), which from Theorem~\ref{theorem:new} means $x_{i+1}y_i$ or $x_{i}y_{i}$ 
belong to $G'$; a contradiction. 
\end{proof}
\item For each $i\notin\{1,n\}$ 
\begin{eqnarray}
x_iy_n\notin G^{bc}.\label{equ:2} 
\end{eqnarray}

\begin{proof}
Suppose for some $i$, $x_iy_n\in G^{bc}$.
Since $x_iy_n\in G'$, using Theorem~\ref{theorem:new} we have a path $x_iyxy_n$ in $G$. But since $x_1y\in G$ we can conclude
that $x_1$ and $y_n$ are even connected with respect to $e$. So from Theorem~\ref{theorem:new} we have $x_1y_n\in G'$, a contradiction. 
\end{proof}
\end{itemize}
%\begin{eqnarray}
%\nonumber
%x_3y_j\notin G^{bc},x_4y_j\notin G^{bc}&\text{for all $j$}\\
%x_1y_3\notin G^{bc}.&\label{eqn:eqn2}
%\end{eqnarray}
We proceed  by showing the following  
\begin{eqnarray}
 xy_{\ell},x_1y_{\ell}\in G&\text{for all $3\leq \ell\leq n-1$.}\label{eqn:myequ}
\end{eqnarray}
We prove this by using induction on $\ell$. First we assume $\ell=3$. 

If $x_1y_3\in G^{bc}$, since $y\neq y_1,y_3$ 
(if $y=y_1$ then $x_1y$ is an edge in $G$ as $x_1y_1$ is, 
and if $y=y_3$, then $x_2 y$ is an edge in $G$ as $x_2y_3$ is an edge in $G$ by assumption, 
that forces $x_2y_2$ to be an edge in $G'$; both lead to a contradiction) and from (I) and (II) we can consider the induced $6$-cycle $x_1y_1x_2yx_3y_3$ in $G^{bc}$, 
which contradicts the fact that $\reg(G)=3$. Then $x_1y_3\notin G^{bc}$.

If $xy_3\in G^{bc}$, then since $x\neq x_1,x_3$ (because otherwise since $xy_2\in G$ we have $x_1y_2\in G$ or $x_3y_2\in G$)
we can consider the $6$-cycle $x_1y_2x_3y_3xy_n$ in $G^{bc}$. 
From the fact that $x_1y_3\notin G^{bc}$ and (\ref{equ:2}) we know this cycle has no chords, contradicting the fact that $\reg(G)=3$. 

We now suppose for each $3\leq \ell <n$ that our claim is true. We show 
$xy_{\ell+1}, x_1y_{\ell+1}\in G$.

Suppose $x_1y_{\ell+1} \in G^{bc}$. Note that from the induction hypothesis and (\ref{eqn:newequ}) and (\ref{equ:2})  we have 
\begin{eqnarray*}
  x_{t}y_{j}\notin G^{bc} &\text{for $3\leq t \leq \ell < n$ and $j\notin\{t,t+1\}$}\\
 x_1y_{t}\notin G^{bc}&\text{for $3\leq t\leq \ell$}.
 \end{eqnarray*}
%Then we just prove that $x_jy_2\notin G^{bc}$ for $4\leq j\leq {k+1}$.

%If we assume there is $4\leq j\leq {k+1}$ such that $x_jy_2\in G$, since we know $x_jy_2\notin G'$ from 
%Theorem~\ref{theorem:new} $x_j$ and $y_2$ are even connected with respect to $e$. Then from the definition we 
%have $x_{j}y\in G$. So since from the induction hypothesis we know $xy_{j-1}\in G$, $x_{j}$ and $y_{j-1}$ are 
%even connected with respect to $e$ and so from Theorem~\ref{theorem:new} $x_jy_{j-1}\in G'$; a contradiction. 
Then the cycle 
$C_{2\ell}:x_1y_2x_3y_3\dots x_{\ell+1}y_{\ell+1}$ is an induced cycle of length $\geq 6$ in $G^{bc}$ which contradicts the fact that 
$\reg(G)=3$. Then we have 
\begin{eqnarray}
x_1y_{t}\notin G^{bc}&\text{for $3\leq t\leq \ell+1$.}\label{eqn:newequ}
\end{eqnarray}
We show that $xy_{\ell+1}\in G$. Suppose $xy_{\ell+1}\in G^{bc}$. 

Note that $x\not\in\{x_1,\dots,x_{\ell+1}\}$. Otherwise, using that by the induction hypothesis
we have $xy_t\in G$ for each $t\in\{1,2,\dots, \ell\}$, we can say $x_{j+1}y_j\in G$ for some $j\in\{1,2\dots,\ell\}$ which is a contradiction. 

Then we can consider the $2\ell+1$-cycle
$x_1y_2x_3y_3\dots y_{\ell+1}xy_n $.
By applying the induction hypothesis, (\ref{equ:1}), (\ref{equ:2}) and (\ref{eqn:newequ}) we can conclude this cycle has no chords, contradicting
the fact that $\reg(G)=3$. This settles (\ref{eqn:myequ}).

Then by using (\ref{equ:1}), (\ref{equ:2}) and (\ref{eqn:myequ}), 
we can find the $2(n-1)$ induced cycle $x_1y_2x_3y_3\dots x_n y_n$ in  $G^{bc}$, contradicting Proposition~\ref{prop:myprop} 
and the fact that $\reg(G)=3$.

Now suppose $s>1$ and our claim holds for each $t<s$. Then by Lemma 3.7 and induction we have $\reg(G')\leq 3$. 

If $\reg(G')=2$, then from Corollary~\ref{lemma:arindamlem} and Lemma~\ref{lemma:Arindamlemma}
$$\reg({I(G)}^{s+1}:e_1\dots e_s)=\reg({I(G')}^{s}:e_2\dots e_s)=2.$$

Also if $\reg(G')=3$, from the induction hypothesis and Lemma~\ref{lemma:Arindamlemma} we have 
$$\reg({I(G)}^{s+1}:e_1\dots e_s)=\reg({I(G')}^{s}:e_2\dots e_s)\leq 3.$$

\end{proof}
\begin{theorem}
 Let $G$ be a bipartite connected graph with edge ideal $I(G)$. If $\reg(G)=3$, then for all $s\geq 1$, $\reg(I(G)^{s})=2s+1$. 
\end{theorem}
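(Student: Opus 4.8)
The plan is to establish matching upper and lower bounds for $\reg(I(G)^s)$ and to run an induction on $s$, with base case $s=1$ furnished by the hypothesis $\reg(I(G))=3=2\cdot 1+1$.

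For the upper bound, the crucial observation is that the minimal monomial generators of $I(G)^s$ are exactly the $s$-fold products $e_1\dots e_s$ of edges of $G$. Consequently Theorem~\ref{theorem:upperbound} gives
$$\reg(I(G)^{s+1}) \leq \max\{\reg(I(G)^{s+1}:e_1\dots e_s)+2s,\ \reg(I(G)^s)\},$$
the maximum being taken over all $s$-fold products. By Theorem~\ref{theorem:main}, each colon ideal $(I(G)^{s+1}:e_1\dots e_s)$ has regularity at most $3$. Substituting this bound, together with the inductive hypothesis $\reg(I(G)^s)=2s+1$, I would obtain
$$\reg(I(G)^{s+1}) \leq \max\{3+2s,\ 2s+1\} = 2(s+1)+1,$$
which closes the induction for the upper bound.

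For the lower bound, I would first note that, since $I(G)^s$ is generated in degree $2s$, one always has $\reg(I(G)^s)\geq 2s$. It then remains to exclude equality. If $\reg(I(G)^s)=2s$ held for some $s$, then $I(G)^s$ would admit a linear minimal free resolution, so by Theorem~\ref{Adam:2} the ideal $I(G)$ would have a linear presentation; as $G$ is bipartite, Proposition~\ref{proposition:Ali} would then force every power of $I(G)$—and in particular $I(G)$ itself—to have a linear resolution, giving $\reg(I(G))=2$ and contradicting $\reg(G)=3$. Hence $\reg(I(G)^s)\geq 2s+1$ for every $s\geq 1$.

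Combining the two bounds yields $\reg(I(G)^s)=2s+1$ for all $s\geq 1$. The substantive difficulty of the whole argument is concentrated in Theorem~\ref{theorem:main}, namely the uniform combinatorial control of the colon ideals $(I(G)^{s+1}:e_1\dots e_s)$; once that bound of $3$ is available, the present theorem reduces to a clean assembly of Theorem~\ref{theorem:upperbound} for the upper estimate and the linear-resolution obstruction (Theorem~\ref{Adam:2} with Proposition~\ref{proposition:Ali}) for the lower estimate, requiring no additional case analysis.
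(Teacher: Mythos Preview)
Your argument is correct and follows essentially the same route as the paper: an induction on $s$ using Theorem~\ref{theorem:upperbound} together with Theorem~\ref{theorem:main} for the upper bound $\reg(I(G)^{s+1})\le 2s+3$, and the linear-resolution obstruction (Theorem~\ref{Adam:2} plus Proposition~\ref{proposition:Ali}) to rule out $\reg(I(G)^{s})=2s$. The only cosmetic difference is that the paper excludes equality with $2s+2$ inside the inductive step for $s+1$, whereas you state the lower bound once for all $s$; the content is identical.
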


\begin{proof}
The proof is by induction on $s$. The case $s=1$ is true by assumption. Suppose $\reg(I(G)^s)=2s+1$, we will show that $\reg(I(G)^{s+1})=2s+3$. 

Note that from Theorem~\ref{theorem:main}
and Theorem~\ref{theorem:upperbound} we have 
$$\reg(I(G))^{s+1}\leq 2s+3.$$ 
On the other hand since $\reg(I(G))^{s+1}\geq 2s+2$, if 
$\reg(I(G))^{s+1}<2s+3$ then we have $\reg(I(G))^{s+1}=2s+2$ or in other words $\reg(I(G))^{s+1}$ has a linear minimal free resolution.

By applying Theorem~\ref{Adam:2} we can conclude $I(G)$ has a linear presentation, and since $G$ is bipartite 
from Proposition~\ref{proposition:Ali} it has a linear minimal free resolution. So $\reg(G)=2$; a contradiction. 
\end{proof}

\textbf{Acknowledgements.} 
This paper was prepared when the first author was visiting the University of Virginia.
The first author is very thankful to Professor C. Huneke for his valuable support and the Department of Mathematics 
for their hospitality. He also would like to thank his advisor S. Faridi. The second author is very grateful to his advisor C. Huneke for constant support, valuable ideas and suggestions throughout the project.

%\bibliographystyle{plain}
%\bibliography{Dadsetani}

\end{document}